\documentclass[a4paper]{article}

\usepackage{amsfonts, amsmath, amsthm, empheq, amssymb, color, indentfirst}

\allowdisplaybreaks[4]

\oddsidemargin  0pt
\evensidemargin 0pt
\marginparwidth 1in
\marginparsep 0pt
\leftmargin 1.25in
\topmargin 14pt
\headheight 14pt
\headsep 20pt
\topskip 0pt

\textheight 9in
\textwidth 6.5in

\newtheorem{thm}{Theorem}

\newtheorem{lem}{Lemma}[section]
\newtheorem{rem}{Remark}[section]
\newtheorem{prob}{Problem}

\renewenvironment{abstract}{%
        \small
        \quotation
         \noindent {\bfseries \abstractname } }%
      {\if@twocolumn\else\endquotation\fi}

\def\R{\mathbb R}
\def\N{\mathbb N}
\def\Z{\mathbb Z}
\def\Q{\mathbb Q}
\def\De{\Delta}
\def\al{\alpha}
\def\be{\beta}
\def\de{\delta}
\def\ep{\epsilon}
\def\sg{\sigma}
\def\Sg{\Sigma}
\def\d{\mathrm d}
\def\e{\mathrm e}
\def\f{\frac}

\def\ga{\gamma}
\def\Ga{\Gamma}
\def\ve{\varepsilon}
\def\te{\theta}
\def\la{\lambda}
\def\vp{\varphi}
\def\na{\nabla}

\def\Om{\Omega}
\def\ov{\overline}
\def\pa{\partial}

\def\wt{\widetilde}

\def\cC{\mathcal{C}}
\def\cD{\mathcal{D}}

\def\dx{\d x}
\def\dt{\d t}
\def\LL{{L^2(\Om)}}

\title{\Large\bf
Carleman estimates for the time-fractional advection-diffusion equations and applications}

\author{\large Zhiyuan LI$^\dag$ \qquad Xinchi HUANG$^\dag$ \qquad Masahiro YAMAMOTO$^\dag$}

\date{}

\begin{document}
\maketitle

\renewcommand{\thefootnote}{\fnsymbol{footnote}}
\footnotetext{\hspace*{-5mm} 
\begin{tabular}{@{}r@{}p{16cm}@{}} 
& Manuscript last updated: \today.\\
$^\dag$ 
& Graduate School of Mathematical Sciences, 
the University of Tokyo,
3-8-1 Komaba, Meguro-ku, Tokyo 153-8914, Japan. 
E-mail: 
zyli@ms.u-tokyo.ac.jp,
huangxc@ms.u-tokyo.ac.jp,
myama@ms.u-tokyo.ac.jp.
\end{tabular}}

\begin{abstract}
In this article, we prove Carleman estimates for the generalized time-fractional advection-diffusion equations by considering the fractional derivative as 
perturbation for the first order time-derivative.
As a direct application of the Carleman estimates, we show a conditional stability of a lateral Cauchy problem for the time-fractional advection-diffusion equation, and we also investigate the stability of an inverse source problem.

\vskip 4.5mm

\noindent\begin{tabular}{@{}l@{ }p{9.5cm}} {\bf Keywords } &
time-fractional advection-diffusion equation,
Carleman estimate,
lateral Cauchy problem,
inverse source problem
\end{tabular}

\vskip 4.5mm

\noindent{\bf AMS Subject Classifications } 35R11, {35R30, 35B35}

\end{abstract}

\section{Introduction and main results}
\label{sec-intro}

Recently, the position of the advection-diffusion equations (ADE) as models in a range of problems in analyzing 
mass transport has been challenged by more and more experiment data. For example, numerous field 
experiments for the solute transport in highly heterogeneous media demonstrate that solute concentration 
profiles exhibited anomalous non-Fickian growth rates, skewness and long-tailed profile (See e.g., 
\cite{BWM00} and \cite{LB03}), which are poorly characterized by the conventional mass transport equations 
based on Fick's law. To more accurately interpret these effects, the non-Fickian diffusion model has been 
proposed to mass transport model, say, time-fractional diffusion equation (FDE):
\begin{equation}
\label{equ-fade}
\pa_t u(x,t) + {\ga}\pa_t^\al u(x,t) =\De u(x,t),\quad (x,t)\in \R^n \times(0,\infty),
\end{equation}
where for $\al\in(0,1)$, by $\pa_t^\al$ we denote the Caputo derivative with respect to temporal variable $t>0$:
$$
\pa_t^\al g(t) = \f1{\Ga(1-\al)} 
\int_0^t (t-\tau)^{-\al}\f{\d g(\tau)}{\d\tau} \d\tau,
$$
where $\Ga(\cdot)$ is the usual Gamma function. See, e.g., \cite{KST06} and \cite{P99} for the properties of the Caputo derivative, and see, e.g., \cite{HWJ02}, \cite{SB03} and the references therein for the FDEs. 

Introducing the time-fractional derivatives of arbitrary order into the equation of mass transport
for a heterogeneous medium achieved great successes, for example, it is shown to be an efficient model for 
describing some anomalous diffusion processes in the highly heterogeneous media by \cite{HH98} in which the 
authors pointed out that diffusion equation with time-fractional derivative was well-performed in describing the 
long-tailed profile of a particle diffuses in a highly heterogeneous medium, and by \cite{MMP} where the 
theoretical fractional calculus on FDEs shows that there holds the non-Fick's law in the anomalous diffusion. We 
also refer to \cite{SB03} in which the MADE site mobile tritium mass decline is well modeled by {the equation \eqref{equ-fade} with the} time-fractional derivative of order $\al=0.33$.

In this paper, assuming $0<\al_\ell<\cdots<\al_1<1$, we consider a generalized time-fractional advection-diffusion equation (FADE) 
\begin{equation}
\label{equ-gov}
(Lu)(x,t)\equiv \pa_t u +\sum_{j=1}^\ell q_j(x,t) \pa_t^{\al_j} u 
-\sum_{i,j=1}^n {a_{ij}(x,t)} \pa_i\pa_ju
{+}\sum_{i=1}^n b_i(x,t)\pa_iu {+} c(x,t)u=F,
\end{equation}
where $(x,t)$ in $\R^n \times(0,\infty)$.
The above equation \eqref{equ-gov} has the spatially and temporally variable coefficients, and such kind of 
equations simulate the advection diffusion, which is more general than that in \cite{HWJ02} and \cite{SB03}, and 
so can be regarded as more feasible model equation than symmetric fractional diffusion equations in modeling 
diffusion in heterogeneous media. Here in this article, we study the stability for a lateral Cauchy problem and 
the stability for an inverse source problem for this equation, and the stability is a fundamental theoretical 
subject. To the best knowledge of the authors, except the special case that $\alpha=1/2$ which is discussed in \cite{KY16}, the stability results for both of the {lateral} Cauchy problem and 
inverse source problem of the equation \eqref{equ-gov} were not yet established. Here it should be mentioned that in the general order case, the transform argument and Fourier methods used in the above mentioned references \cite{HWJ02} and \cite{SB03} can not work any more because of the non-symmetry of the system and $t$-dependent coefficients, and it is very complicated to follow the treatment used in \cite{KY16} even for $\alpha=1/3$. One of the reasons is that in this case one needs to establish a Carleman estimate for parabolic operators of order $6$ in the space variables, which will cause a huge amount of computation. However, from the shape of 
the equation \eqref{equ-gov} we regard the lower fractional order term as a perturbation of the first order 
time-derivative, which enable one to derive the Carleman estimate for the equation \eqref{equ-gov} in the 
framework of the Carleman estimate for the parabolic equations, and then consider the stability of the lateral 
Cauchy problem and inverse source problem. 

To this end, we start from fixing some general settings and notations. Let $T>0$ be fixed constant and 
$\Om\subset\R^n$ is a bounded domain, $n\ge1$, with sufficiently smooth boundary $\pa\Om$, for example, of
$C^2$-class. We set $Q:=\Om\times(0,T)$. Assume that $a_{ij}=a_{ji}\in C^1(\ov Q)$, $1\le i,j\le n$, satisfies
that 
$$
\rho \sum_{j=1}^n \xi_j^2 \le \sum_{j,k=1}^n a_{jk}(x,t) \xi_j\xi_k, \quad (x,t)\in\ov{Q},\ \xi\in\R^n,
$$
where $\rho>0$ is a constant independent of $x,t,\xi$. We set 
$\pa_{\nu_A} u = \sum_{i,j=1}^n a_{ij}\nu_i \pa_j u$ where $(\nu_1,\cdots,\nu_n)$ denotes the unit outwards 
normal vector to the boundary {$\pa\Om$}. Let $\LL$ and $H^{k,\ell}(Q)$ ($k\ge0,\ell\ge0$) denote 
Sobolev spaces (See, e.g., \cite{A75} and \cite{Y09}). Similar to Theorem 5.1 in \cite{Y09}, for arbitrary non-empty relatively open sub-boundary $\Ga\subset\pa\Om$, we choose a 
bounded domain $\Om_1$ with sufficiently smooth boundary such that
\begin{equation}
\label{con:Om1}
\Om {  \subsetneq} \Om_1, \quad \ov{\Ga} = \ov{\pa\Om\cap\Om_1}, \quad 
\pa\Om\setminus\Ga \subset \pa\Om_1.
\end{equation}
We then apply Lemma 4.1 in \cite{Y09} to {find a function} $d\in C^2(\ov{\Om_1})$ satisfying
\begin{equation}
\label{con:d}
d>0 \mbox{ in }\Om, \quad |\na d| > 0 \mbox{ on } \ov\Om, \quad d=0 \mbox{ on }\pa\Om\setminus\Ga.
\end{equation}

Now let us first consider the equation \eqref{equ-gov} with $\al_1<\f12$ (we call the corresponding diffusion as 
sub-diffusion), we set the weight function {$\vp_1$} as follows
$$
\vp_1(x,t)=\e^{\la\psi_1(x,t)},\quad \psi_1{  (x,t)}=d(x) - \be t^{2-2\al_1}, \quad \forall\la\ge0,\ {  x\in\ov{\Om_1},\ t\ge 0},
$$
where $\be>0$ is a positive constant which will be chosen later, and we have the following Carleman type 
estimate for the equation \eqref{equ-gov}
\begin{thm}
\label{thm-CE<0.5}
We assume $\al_1<\f12$ and $q_i,b_j,c\in L^\infty(Q)$ $(i=1,\cdots,\ell, j=1,\cdots,n)$ in the equation 
\eqref{equ-gov}, and we let $\Sg_0=\ov\Om\times\{0\}$ and $D\subset Q$ be bounded domain whose 
boundary $\pa D$ is composed of a finite number of smooth surfaces. Then there exists a constant $\la_0>0$ 
such that for arbitrary $\la\ge\la_0$, we can choose a constant $s_0(\la)>0$ satisfying: there exist constants 
$C=C(s_0,\la_0)>0$ and $C(\la)>0$ such that
\begin{align}
\label{esti-CE<0.5}
&\int_D\left\{\f{1}{s\vp_1}|\pa_t u|^2+s\la^2\vp_1|\na u|^2
+ s^3\la^4\vp_1^3 u^2 \right\} \e^{2s\vp_1} \dx\dt
\nonumber\\
\le& C\int_D |\wt Lu|^2 \e^{2s\vp_1} \dx\dt
+{C(\la)}\e^{C(\la)s}\int_{\pa D} (|\na u|^2 + u^2) {  \d \Sg}
+ {C(\la)}\e^{C(\la)s}\int_{\pa D\setminus\Sigma_0} |\pa_t u|^2 \d\Sg
\end{align}
for all {$s\ge s_0$} and all $u\in H^{2,1}(D)$, where $\wt L:= L - \sum_{j=1}^\ell q_j(x,t)\pa_t^{\al_j}$.
\end{thm}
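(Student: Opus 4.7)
My plan is to prove \eqref{esti-CE<0.5} as a parabolic Carleman estimate for the second-order operator $\wt L = \pa_t - \sum a_{ij}\pa_i\pa_j + \sum b_i \pa_i + c$. The fractional terms $q_j\pa_t^{\al_j}u$ have been moved entirely into the left-hand operator (i.e., they are absent from the right-hand side of \eqref{esti-CE<0.5}), so no fractional machinery is needed for this theorem; the role of the exponent $2-2\al_1$ in $\psi_1$ is only to produce a weight satisfying $\pa_t \psi_1(x,0) = 0$ (since $1-2\al_1 > 0$), a property that both eliminates the $|\pa_t u|^2$ boundary contribution on $\Sg_0$ and sets up the compatibility needed in later theorems where $L$ itself is treated.

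The technical core is the classical conjugation-and-splitting argument. Set $w = e^{s\vp_1} u$ and introduce the conjugated principal operator $\wh P_s w = e^{s\vp_1}\bigl(\pa_t - \sum a_{ij}\pa_i\pa_j\bigr)(e^{-s\vp_1} w)$. Split $\wh P_s = \wh P_s^{+} + \wh P_s^{-}$ into its formally self-adjoint and anti-self-adjoint parts on $L^2(D)$, and expand
\[
\|\wh P_s w\|^2 = \|\wh P_s^{+} w\|^2 + \|\wh P_s^{-} w\|^2 + 2\langle \wh P_s^{+} w,\wh P_s^{-} w\rangle.
\]
Integration by parts in $D$ turns the cross term into a positive bulk quantity of order $s\la^2\vp_1|\na w|^2 + s^3\la^4\vp_1^3 w^2$ plus surface integrals on $\pa D$. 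Positivity of the bulk relies on the uniform ellipticity of $(a_{ij})$, on $|\na d|>0$ on $\ov\Om$, and on the boundedness of $\pa_t\psi_1 = -\be(2-2\al_1)t^{1-2\al_1}$ on $[0,T]$; taking $\be$ small and then $\la$ large guarantees that the $\pa_t\psi_1$-contribution is dominated by the spatial pseudoconvexity term, and subsequently that the zeroth- and first-order terms of $\wt L$ are absorbed once $s\ge s_0(\la)$ is sufficiently large.

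To recover the $(s\vp_1)^{-1}|\pa_t u|^2$ term on the left-hand side, I would use the equation to bound $|\pa_t u|^2\le C(|\wt L u|^2 + |\na^2 u|^2 + |\na u|^2 + u^2)$ and control $|\na^2 u|^2$ through the self-adjoint part $\|\wh P_s^{+}w\|^2$ by a standard second-derivative-recovery argument. All boundary contributions on $\pa D$ are then bounded crudely using the supremum of $\vp_1$ and its derivatives on $\ov D$, giving $C(\la)e^{C(\la)s}\int_{\pa D}(|\na u|^2+u^2)\,\d\Sg$; crucially, the $|\pa_t u|^2$ boundary contribution appears with a coefficient proportional to $\pa_t\psi_1$, which vanishes on $\Sg_0$ by the choice of exponent $2-2\al_1$, which is why only $\pa D\setminus\Sg_0$ appears in the last term of \eqref{esti-CE<0.5}.

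The main obstacle is the careful bookkeeping of the boundary terms on $\pa D$: unlike the Fursikov--Imanuvilov setting in which the weight blows up at $t=0$ and $t=T$ and all temporal boundary integrals vanish automatically, here $\vp_1$ is bounded on $\ov D$ and every integration by parts leaves a genuine surface contribution that must be tracked. Verifying that the integration-by-parts coefficient in front of $|\pa_t u|^2$ vanishes on $\Sg_0$ is precisely what links the specific weight $\psi_1 = d(x) - \be t^{2-2\al_1}$ to the assumption $\al_1<\tfrac12$, and is the essential new ingredient beyond the classical parabolic Carleman argument.
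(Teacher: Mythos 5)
Your overall strategy coincides with the paper's: conjugate with $w=\e^{s\vp_1}u$, split the conjugated operator into two parts, expand the square, turn the cross term into a positive bulk quantity of order $s\la^2\vp_1|\na w|^2+s^3\la^4\vp_1^3w^2$ by integration by parts while tracking all surface integrals, and absorb the first- and zeroth-order terms of $\wt L$ for large $\la$ and $s$. One harmless difference: to recover $\f{1}{s\vp_1}|\pa_t u|^2$ the paper does not go back to the equation and second derivatives; it keeps $\|P_2w\|^2$ (the factor containing $\pa_t$) and writes $\pa_t w=P_2w-2s\la\vp_1\sum_{i,j}a_{ij}(\pa_id)\pa_jw-2s\la^2\vp_1\sg w$, so that $\f{1}{s\vp_1}|\pa_t w|^2$ is bounded by $\f{C}{s\vp_1}|P_2w|^2$ plus gradient and zeroth-order terms that are absorbed; your route via $|\pa_t u|^2\le C(|\wt Lu|^2+|\na^2u|^2+\cdots)$ and a second-derivative recovery is workable but more roundabout and generates extra boundary bookkeeping that the paper avoids.

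However, your explanation of the one structural feature you call ``the essential new ingredient'' is wrong. The exclusion of $\Sg_0$ from the $|\pa_t u|^2$ boundary term has nothing to do with $\pa_t\psi_1$ vanishing at $t=0$: in the cross term, the only surface integral containing $\pa_t w$ is $-\sum_{i,j}\int_{\pa D}a_{ij}(\pa_jw)\nu_i\,\pa_tw\,\d\Sg$, coming from the spatial integration by parts of $\int(\sum a_{ij}\pa_i\pa_jw)\pa_tw$, and its coefficient is $a_{ij}\nu_i$, not $\pa_t\psi_1$; it drops out on $\Sg_0$ simply because the spatial components $\nu_1,\dots,\nu_n$ of the outward normal vanish on the bottom face $t=0$, a purely geometric fact valid for any weight. (The time-derivative boundary term produced by integrating $-s^2\la^2\vp_1^2\sg w\,\pa_tw$ by parts in $t$ involves only $w^2\nu_{n+1}$, so it causes no trouble either.) In this theorem the exponent $2-2\al_1$ enters only through the boundedness of $\pa_t\psi_1=\be(2\al_1-2)t^{1-2\al_1}$ on $[0,T]$, needed to absorb the correction term $s\la\vp_1(\pa_t\psi_1)w$ and the $J_4$-type contribution into $s^3\la^4\vp_1^3w^2$; the real reason for this particular weight (monotone decay in $t$ and the identity $t^{1-2\al_1}\e^{2s\vp_1}=-\f{1}{4\be s\la\vp_1(1-\al_1)}\pa_t(\e^{2s\vp_1})$, which requires $\al_1<\f12$ strictly) only appears in Lemma \ref{lem-esti-frac}, where the fractional terms are absorbed for Theorem \ref{thm-LCP<0.5}, not in the present estimate. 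If you carry out the boundary computation as planned you will still reach \eqref{esti-CE<0.5}, but the justification you state for why only $\pa D\setminus\Sg_0$ appears would not survive the actual calculation and should be replaced by the normal-vector argument above.
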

From the above Carleman estimate, similar to the argument used in \cite{Y09}, we further have the conditional stability for the lateral Cauchy problem for the equation \eqref{equ-gov}. However, a bit different from the results in \cite{Y09}, here due to the choice of the weight function in the derivation of the Carleman estimate in Theorem \ref{thm-CE<0.5}, we can only prove that the continuous dependency of the solution with respect to initial values, boundary values and source terms in the case of {$\al_1\in(0,\f12)$}, say,
\begin{thm}
\label{thm-LCP<0.5}
We assume $\al_1<\f12$ and $q_i,b_j,c\in L^\infty(Q)$ $(i=1,\cdots,\ell, j=1,\cdots,n)$ in the equation 
\eqref{equ-gov}. Let $\Ga\subset\pa\Om$ be an arbitrary non-empty relatively open sub-boundary of $\pa\Om$. For any 
bounded domain $\Om_0$ such that $\ov{\Om_0}\subset\Om\cup\Ga$, 
$\pa\Om_0\cap\pa\Om \subsetneq \Ga$ is a non-empty open subset of $\pa\Om$, we can choose a 
sufficiently small $\ve=\ve(T,\Om_0)>0$ such that 
\begin{align}
\label{esti-Cauchy<0.5}
\|u\|_{H^{1,1}(\Om_0\times(0,\ve))}
\le C\|u \|_{H^{1,1}(Q)}^{1-\te} \cD^\te,
\end{align}
where $\cD:=\|u(\cdot,0)\|_{H^1(\Om)}+\|F\|_{L^2(Q)} + \|u\|_{H^1(\Ga\times(0,T))} +
\|\pa_{\nu_A} u\|_{L^2(\Ga\times(0,T))}$, and the constants $C>0$ and $\te\in(0,1)$ may depend on $T$, the choice of $\Om_0$ and the coefficients of the equation \eqref{equ-gov}.
\end{thm}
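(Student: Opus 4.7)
The plan is to apply Theorem~\ref{thm-CE<0.5} to $w=\chi u$ on $D=\Om\times(0,\ve)$, where $\chi$ is a level-set cutoff for $\psi_1$, and to absorb the fractional time-derivative terms into the dominant $\f{1}{s\vp_1}|\pa_t w|^2 \e^{2s\vp_1}$ term on the left-hand side of \eqref{esti-CE<0.5}. First I would fix three levels $0<c_1<c_2<c_3<\|d\|_{L^\infty(\ov\Om)}$ such that $\ov{\Om_0}\times\{0\}$ lies inside $\{\psi_1>c_3\}$; this is possible because the hypothesis $\ov{\Om_0}\subset\Om\cup\Ga$ forces $d\ge 2\de_0$ on $\ov{\Om_0}$ for some $\de_0>0$, and $\psi_1(\cdot,0)=d$. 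I would then choose $\ve>0$ small and the parameter $\be$ in $\psi_1$ large enough that $\psi_1(x,\ve)\le c_1$ for every $x\in\ov\Om$, and take $\chi=\chi(\psi_1)$ smooth with $\chi\equiv 1$ on $\{\psi_1\ge c_2\}$ and $\chi\equiv 0$ on $\{\psi_1\le c_1\}$. Substituting $\wt L w=\chi(F-\sum_{j=1}^\ell q_j\pa_t^{\al_j}u)+[\wt L,\chi]u$ into \eqref{esti-CE<0.5} then reduces matters to controlling the terms on the right-hand side.

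The boundary terms are harmless: $w$ vanishes on $(\pa\Om\setminus\Ga)\times(0,\ve)$ (because $d=0$ on $\pa\Om\setminus\Ga$ forces $\psi_1\le 0$ there) and on $\Om\times\{\ve\}$ (by our choice of $\be$); on $\Ga\times(0,\ve)$ the ellipticity inequality $|\na u|^2\le C(|\pa_{\nu_A}u|^2+|\na^\tau u|^2)$ bounds the contribution by $C(\la)\e^{C(\la)s}\cD^2$; and on $\Om\times\{0\}$ only the $(|\na u|^2+u^2)$-integral survives (the $\pa_t u$-integral of \eqref{esti-CE<0.5} excludes $\Sg_0$), bounded by $\|u(\cdot,0)\|_{H^1(\Om)}^2$. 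The source $\chi F$ and the commutator $[\wt L,\chi]u$ contribute respectively at most $C(\la)\e^{C(\la)s}\cD^2$ and at most $C\e^{2s\e^{\la c_2}}\|u\|_{H^{1,1}(Q)}^2$, since $[\wt L,\chi]u$ is a first-order operator in $u$ with coefficients supported in $\{c_1\le\psi_1\le c_2\}$, on which $\vp_1\le\e^{\la c_2}$.

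The delicate step is the fractional-derivative contribution. Writing out the Caputo derivative and applying Cauchy--Schwarz---valid because $\al_j<1/2$ makes $\int_0^t(t-\tau)^{-2\al_j}\,\d\tau$ finite---together with Fubini and the monotonicity $\vp_1(x,t)\le\vp_1(x,\tau)$ for $\tau\le t$, one derives
\begin{equation*}
\int_D |\pa_t^{\al_j} u|^2 \e^{2s\vp_1}\,\dx\,\dt \le C\,\ve^{2-2\al_j}\int_D |\pa_t u|^2 \e^{2s\vp_1}\,\dx\,\dt.
\end{equation*}
Splitting the right-hand side into $\{\psi_1\ge c_2\}$ (on which $u=w$) and $\{\psi_1<c_2\}$ (on which $\vp_1\le \e^{\la c_2}$ bounds the weight), the first part is absorbed into $\int_D\f{1}{s\vp_1}|\pa_t w|^2\e^{2s\vp_1}\,\dx\,\dt$ provided $\ve$ is small enough that $2Cs\vp_1\,\ve^{2-2\al_1}\le 1$ on $\ov D$, while the second is folded into the $\|u\|_{H^{1,1}(Q)}^2$-bound already produced by the commutator.

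Restricting the resulting estimate to $\{\psi_1\ge c_3\}\cap D\supset\Om_0\times(0,\ve')$ for some $\ve'\in(0,\ve]$, where $w=u$ and $\vp_1\ge\e^{\la c_3}$, I would arrive at
\begin{equation*}
\e^{2s\e^{\la c_3}}\|u\|_{H^{1,1}(\Om_0\times(0,\ve'))}^2 \le C\e^{C(\la)s}\cD^2 + C\e^{2s\e^{\la c_2}}\|u\|_{H^{1,1}(Q)}^2,
\end{equation*}
valid for $s\in[s_0,S]$ with $S\sim\ve^{-(2-2\al_1)}$. A standard optimisation in $s$---distinguishing whether the optimal value $s^\ast\sim\log(\|u\|_{H^{1,1}(Q)}/\cD)$ lies inside $[s_0,S]$ (yielding the H\"older bound directly) or exceeds $S$ (in which case using $s=S$ together with the trivial bound $\|u\|_{H^{1,1}(\Om_0\times(0,\ve'))}\le\|u\|_{H^{1,1}(Q)}$ still delivers the desired inequality after possibly shrinking $\te$)---then produces \eqref{esti-Cauchy<0.5}. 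The main obstacle is precisely this absorption of the fractional term; the hypothesis $\al_1<1/2$ is essential, since it is what both validates the Cauchy--Schwarz estimate on the Caputo integral and yields the small factor $\ve^{2-2\al_j}$ that makes absorption possible.
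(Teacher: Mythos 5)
Your overall architecture (level sets of $\psi_1$, a cutoff $\chi$, Theorem \ref{thm-CE<0.5} applied to $\chi u$, commutator and boundary terms treated as data, final minimization in $s$) matches the paper, but your treatment of the fractional term has a genuine gap. After Cauchy--Schwarz and Fubini you only obtain
$\int_D |\pa_t^{\al_j}u|^2\e^{2s\vp_1}\,\dx\dt \le C\ve^{2-2\al_j}\int_D|\pa_t u|^2\e^{2s\vp_1}\,\dx\dt$,
and to absorb this into the left-hand term $\int_D\f1{s\vp_1}|\pa_t w|^2\e^{2s\vp_1}\,\dx\dt$ you need $Cs\vp_1\ve^{2-2\al_1}\le \f12$ pointwise, i.e.\ $s\le S$ with $S$ a \emph{fixed} constant once $\ve$ (and $\la$) are fixed. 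A Carleman-type inequality valid only for $s$ in a bounded interval $[s_0,S]$ cannot produce the H\"older estimate \eqref{esti-Cauchy<0.5}: in the regime where the optimal $s^\ast\sim\log(\|u\|_{H^{1,1}(Q)}/\cD)$ exceeds $S$ you are left, at $s=S$, with the residual term $C\e^{-\de S}\|u\|_{H^{1,1}(Q)}^2$, and no choice of a smaller $\te$ bounds this by $C\|u\|_{H^{1,1}(Q)}^{2(1-\te)}\cD^{2\te}$, since $\cD$ may be arbitrarily small (indeed $\cD=0$ is possible, and \eqref{esti-Cauchy<0.5} with $\te>0$ then forces $u=0$ on $\Om_0\times(0,\ve)$, which your capped-$s$ estimate cannot yield). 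So the fallback step ``use $s=S$ and shrink $\te$'' fails.

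The paper avoids exactly this by exploiting the specific time profile of the weight, $\psi_1=d(x)-\be t^{2-2\al_1}$: the identity $t^{1-2\al_1}\e^{2s\vp_1}=-\f{1}{4\be s\la\vp_1(1-\al_1)}\pa_t\bigl(\e^{2s\vp_1}\bigr)$ and an integration by parts in $t$ (Lemma \ref{lem-esti-frac}) convert the factor $t^{1-2\al_1}$ produced by Cauchy--Schwarz into the weight $\f{1}{s\la\vp_1}$. The fractional term is then dominated by $\f{C}{\la}\int\f1{s\vp_1}|\pa_t u|^2\e^{2s\vp_1}$ on a slightly larger level set, so absorption works for one fixed large $\la$ \emph{uniformly in $s\ge s_0$}; the resulting inequality \eqref{esti-u} holds for all $s\ge0$ and the standard optimization then gives the H\"older bound. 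To repair your argument you should replace your $\ve^{2-2\al_j}$ bound by this integration-by-parts estimate (your monotonicity observation $\vp_1(x,t)\le\vp_1(x,r)$ for $r\le t$ is exactly what makes it work), keeping the gain of $\f1{s\la\vp_1}$ rather than a constant independent of $s$.
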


The above arguments used for deriving Theorem \ref{thm-LCP<0.5} cannot work anymore for the general 
fractional order $\al_1\ge\f12$ (the corresponding diffusion is called as sup-diffusion), but for some fractional orders, it is expected that the method of Carleman estimate still works. As a partially affirmative answer, we focus on deriving the Carleman estimate for the case of rational fractional order 
$\al_1$ which is smaller than $\f34$, i.e. $\al_1\le \f34$ in the form of $\f{m}k$, $m,k\in\N$. For this, we consider the following equation
\begin{equation}
\label{equ-gov2}
(Lu)(x,t)\equiv\pa_t u+q(x)\pa_t^\al u-\sum_{i,j=1}^n a_{ij}(x) \pa_i\pa_ju {+}\sum_{i=1}^n b_i(x)\pa_iu{+}c(x)u=F
\end{equation}
in the case of $\al\in (0,\f34]$, where the coefficients satisfy: {$b_i\in L^\infty(\Om)$, $i=1,\cdots,n$, 
$c\in L^\infty(\Om)$} and the source term $F$ is assumed to be smooth enough. To this end, we first introduce 
the Riemann-Liouville fractional derivative of order {$\al\in[m-1,m)$ with $m=1,2,\cdots$ which is usually defined by
\begin{equation}
\label{def-RLD}
D_t^\al h(t):=\f1{\Ga(m-\al)} \f{\d^m}{\dt^m}\int_0^t (t-r)^{-\al+m-1} h(r) \d r
\end{equation}
}
{
and the Riemann-Liouville fractional integral $D_t^{-\al}$ of order $\al\in(0,1)$ which is defined by
\begin{equation}
\label{def-RLI}
D_t^{-\al} h(t):=\f1{\Ga(\al)} \int_0^t (t-r)^{\al-1} h(r) \d r.
\end{equation}
}
We choose the weight function $\vp_2$: 
\begin{equation}
\label{con:weight}
\vp_2(x,t) = e^{\la\psi_2(x,t)}, \quad \psi_2(x,t) = d(x) - \beta(t - t_0)^2 + c_0,
\end{equation}
where $t_0\in (0,T)$ and $\beta>0$ are to be fixed later and $c_0 := \max\{\beta t_0^2, \beta (T - t_0)^2\}$ guarantees the non-negativeness of the function $\psi_2$.
In needs of applications, we establish a special Carleman estimate with a cut-off function 
$\chi_0\in C^\infty(\R^{n+1})$ which is defined by
$$
{\chi_0}(x,t)=
\begin{cases}
1, &(x,t)\in D_0,\\
0, &\mbox{outside of } D,
\end{cases}
$$
where $D$ is an arbitrary subdomain of $Q_1:=\Om_1\times(0,T)$ and $D_0\subsetneq D$. Now we are ready to state the Carleman estimate.
\begin{thm}
\label{thm-CE<=0.75}
For any rational number $\al= \f{m}{k}\le \f34$, $m,k\in\N$, we assume that $D_t^{\f{j}k} F \in L^2(Q)$ for $j=j_1,...,j_k$ 
where $j_l := -\f{k}{2} + \f{(-1)^k -1}{4} + l, l=1,...,k$.
Then there exist constants $\hat{s}\ge 1$ and $C>0$ such that
\begin{align*}
&\int_Q \chi_0^2\left(\sum_{i,l=1}^n (s\vp_2)^{\f{4}{k}j_1 -1}|\pa_i\pa_l u|^2 + (s\vp_2)^{\f{4}{k}j_1 +1}|\na u|^2 + \sum_{j=j_1}^{j_k+k} (s\vp_2)^{-\f{4}{k}(j-j_1)+3} |D_t^{\f{j}k} u|^2\right)\e^{2s\vp_2} \dx\dt \\
&\hspace{0cm}\le C\int_Q \chi_0^2\left(\sum_{j=j_1}^{j_k} (s\vp_2)^{-\f{4}{k}(j-j_1)} |D_t^{\f{j}k} F|^2\right)\e^{2s\vp_2} \dx\dt + Low + Bdy
\end{align*}
for all $s\ge \hat{s}$, large fixed $\la \ge 1$ and all $u$ smooth enough satisfying \eqref{equ-gov2}  and  $u(x,0)=0$ for $\forall x\in\Om$, where $Low$ and $Bdy$ are defined as follows
\begin{align}
\label{Low}
&Low 
:= Cs\int_Q \left(|\pa_t\chi_0|^2 + |\na\chi_0|^2 + \sum_{i,j=1}^n |\pa_i\pa_j \chi_0|^2\right)\left(\sum_{j=j_1}^{j_k} \left(|\na(D_t^{\f{j}k} u)|^2 + |D_t^{\f{j}k} u|^2 \right)\right)\e^{2s\vp_2} \dx\dt, \\
\label{Bdy}
&Bdy 
:= C\e^{Cs}\int_{\pa Q} \left(|\pa_t\chi_0|^2 + |\na\chi_0|^2 + |\chi_0|^2\right) \left(\sum_{j=j_1}^{j_k} \left(|\na_{x,t} (D_t^{\f{j}{k}} u)|^2 + |D_t^{\f{j}{k}} u|^2 \right)\right) \d S\dt.
\end{align}
\end{thm}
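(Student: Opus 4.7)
The strategy is to reduce the fractional equation to a coupled second-order parabolic system in which the fractional couplings appear as perturbations that can be absorbed when $s$ is large. Since $u(\cdot,0)=0$, the Caputo derivative of $u$ coincides with the Riemann--Liouville derivative $D_t^\al u$, and the semigroup identity $D_t^{a/k}D_t^{b/k}u=D_t^{(a+b)/k}u$ is available for the exponents appearing below. Set $u_l:=D_t^{j_l/k}u$ for $l=1,\dots,k$ and apply $D_t^{j_l/k}$ to \eqref{equ-gov2}. Because all coefficients depend only on $x$, the operator $D_t^{j_l/k}$ commutes with the spatial part and one obtains
$$
D_t^{(j_l+k)/k}u + q\,D_t^{(j_l+m)/k}u - \sum_{i,j=1}^n a_{ij}\pa_i\pa_j u_l + \sum_{i=1}^n b_i\pa_i u_l + c\,u_l = D_t^{j_l/k}F,
$$
so that $(u_1,\dots,u_k)$ satisfies a coupled parabolic system. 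As $l$ varies, the time-indices $j_l$, $j_l+m$, $j_l+k$ range exactly over $\{j_1,\dots,j_k+k\}$, which is why the summation on the left-hand side of the theorem is indexed by this set.

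Next I would localize by multiplying by $\chi_0$. Since $D_t^{j/k}$ has no Leibniz rule with $\chi_0$, computing $D_t^{j/k}(\chi_0 u)$ via the kernel representations \eqref{def-RLD}--\eqref{def-RLI} and integrating by parts in $t$ produces remainders of schematic form $(\pa_t\chi_0)\cdot D_t^{j'/k}u$ with $j'\le j$, together with boundary traces. These are precisely the contributions collected in $Low$ (through the factors $|\pa_t\chi_0|^2+|\na\chi_0|^2+|\pa_i\pa_j\chi_0|^2$ against $|D_t^{j/k}u|^2$ and $|\na D_t^{j/k}u|^2$) and in $Bdy$ (the traces on $\pa Q$).

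I would then apply a Fursikov--Imanuvilov type Carleman estimate for second-order parabolic operators to each equation of the localized system, using the weight $\vp_2=e^{\la\psi_2}$ with $\psi_2=d(x)-\be(t-t_0)^2+c_0$; the quadratic-in-$(t-t_0)$ term is the standard choice that simultaneously gives control of $|\pa_t u_l|^2$, $|\na u_l|^2$ and $|\pa_i\pa_j u_l|^2$ in the interior. Multiplying the $l$-th estimate by the rescaling factor $(s\vp_2)^{-4(j_l-j_1)/k+3}$ and summing in $l$ yields the left-hand side of the target inequality. The cross-equation coupling $q\,D_t^{(j_l+m)/k}u$ on the right-hand side is then matched with the slice indexed by $j_l+m$ on the left-hand side: the ratio of the two weights is $(s\vp_2)^{4m/k-3}$, which is bounded above precisely when $\al=m/k\le 3/4$, so choosing $\hat s$ large enough completes the absorption.

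\textbf{Main obstacle.} The principal technical hurdle is the simultaneous bookkeeping of the commutators $[\chi_0,D_t^{j/k}]$, which must be unravelled via the kernel definitions of $D_t^{\pm j/k}$ to expose their lower-order nature, and the precise powers of $s\vp_2$ attached to each of the $k$ slices, which must be balanced so that every fractional coupling is genuinely absorbable by another slice. The threshold $\al\le 3/4$ is sharp for this absorption; above it, a different strategy (such as working with a parabolic system of higher spatial order, as mentioned around \cite{KY16} in the introduction) would be needed. A secondary subtlety is justifying the semigroup identity for compositions of $D_t^{a/k}$ and $D_t^{b/k}$ at the present regularity level, which uses $u(\cdot,0)=0$ together with the hypothesis $D_t^{j/k}F\in L^2(Q)$ propagated to $u$ through the equation.
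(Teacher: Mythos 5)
Your overall skeleton --- apply $D_t^{j/k}$ for a whole family of indices $j$, view the result as a coupled parabolic system, apply parabolic Carleman estimates with graded powers of $s\vp_2$ slice by slice, and absorb the coupling $q\,D_t^{(j+m)/k}u$ into a neighbouring slice using $\al\le\f34$ --- is indeed the paper's strategy. But two of your steps, as described, do not work. First, the localization: you cut off \emph{before} differentiating and propose to handle the commutators $[\chi_0,D_t^{j/k}]$ via the kernel representation, asserting that the remainders have the schematic form $(\pa_t\chi_0)\,D_t^{j'/k}u$ plus boundary traces, i.e.\ exactly the contributions in \eqref{Low}--\eqref{Bdy}. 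That is not true: $D_t^{j/k}$ is nonlocal in time, so $D_t^{j/k}(\chi_0 u)-\chi_0\,D_t^{j/k}u$ at time $t$ involves the entire history of $u$ on $(0,t)$ weighted by differences $\chi_0(\cdot,t)-\chi_0(\cdot,r)$; it is \emph{not} supported where $\na_{x,t}\chi_0$ lives, and hence cannot be dominated by $Low$ and $Bdy$, whose integrands vanish outside the transition region of $\chi_0$. The paper avoids any such commutator by reversing the order: it first applies $D_t^{j/k}$ to the equation (the zero initial condition makes $D_t^{j/k}$ commute with $\pa_t$ and with the $x$-dependent spatial operator), sets $u_j=D_t^{j/k}u$, and only then multiplies by $\chi_0$, commuting $\chi_0$ with the \emph{local} operator $\wt L$ through the ordinary Leibniz rule; this produces precisely the terms collected in $Low$ and $Bdy$, while the fractional coupling simply becomes $(\chi_0 q)u_{j+m}$, carrying the same cutoff and therefore matchable against the left-hand side.

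Second, the absorption mechanism. You argue that the ratio of the weight on the coupling term to the weight on the matching left-hand slice is $(s\vp_2)^{4m/k-3}$, bounded for $\al\le\f34$, ``so choosing $\hat s$ large enough completes the absorption.'' At the critical exponent $\al=\f34$ --- the very case used in Theorems \ref{thm-LCP=0.75} and \ref{thm-ISP=0.75} --- this ratio is identically of order one, so no largeness of $s$ produces the required smallness. In the paper the absorption comes from the parameter $\la$: the weighted parabolic Carleman estimate (Lemma \ref{CE:para}, with the additional weight $\vp_2^{\tau}$, which must be \emph{proved} by redoing the integration by parts --- one cannot literally ``multiply the $l$-th estimate by $(s\vp_2)^{-\f4k(j_l-j_1)+3}$'', since that factor is a function of $(x,t)$ and an integral inequality cannot be rescaled by a non-constant weight) carries extra powers of $\la$ on its left-hand side relative to the right-hand side weight $(s\la\vp_2)^{\tau}$, and the coupling terms are absorbed by fixing $\la$ large. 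As written, your plan closes only for $\al<\f34$ strictly and leaves the weighted lemma unproved; you would also need the even/odd-denominator bookkeeping that matches the index set $j_l=-\f{k}2+\f{(-1)^k-1}4+l$, which the paper handles by treating the two cases separately.
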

We notice that the above Carleman estimate with the regular weight function $\vp_2$ requires the 
homogeneous initial condition, which possibly roots in the memory effect of time-fractional {derivatives}. 
We refer to the publications \cite{CLN09} and \cite{XCY} for the Carleman estimate for the diffusion equations with 
a half order time-fractional derivative where the homogeneous initial value is also necessary for deriving the 
Carleman estimates. Moreover, it should be mentioned here that our method can work for multi-term case where the fractional order $\al_j$, $j=1,...,l$ are all rational numbers and the largest one $\al_1$ is smaller than $\f34$.

As an application of Theorem \ref{thm-CE<=0.75}, we can easily derive the stability for the lateral Cauchy problem for the equation \eqref{equ-gov2}. For simplicity, we only state the theorem in the critical case: $\al=\f34$. The results for the case of rational order less than $\f34$ can be established by using the similar argument.

\begin{thm}
\label{thm-LCP=0.75}
For any small $\ep>0$ and any bounded domain $\Om_0$ satisfying $\ov{\Om_0}\subset\Om\cup\Ga$, $\pa\Om_0\cap\pa\Om$ be a non-empty open subset of $\pa\Om$ and $\pa\Om_0\cap\pa\Om\subsetneq\Ga$, there exist constants $C>0$ and $\te\in (0,1)$ such that
$$
\|u\|_{H^{2,1}(\Om_0\times(\ep,T-\ep))} \le C \cD + CM^{1-\te}\cD^\te
$$
for $u$ satisfying equation \eqref{equ-gov2} with $\al = \f34$ and $u(x,0)=0$ for $\forall x\in\Om$. Here $M$ and $\cD$ are defined as follows
{
\begin{align*}
&M :=\|D_t^{\f12} u\|_{H^{1,0}(Q)}, \\
&\cD := \sum_{j=-1}^2 \|D_t^{\f{j}4} F\|_{L^2(Q)} + \|D_t^{\f12} u\|_{H^1(\Ga\times(0,T))} + \|{\pa_\nu} (D_t^{\f12} u)\|_{L^2(\Ga\times(0,T))}.
\end{align*}
}
\end{thm}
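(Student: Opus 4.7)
My plan is to apply Theorem \ref{thm-CE<=0.75} with $\al=\f34$ (so $k=4$, $j_1=-1$, $j_k=2$) directly to $u$, inserting a cutoff $\chi_0$ which isolates the target cylinder $\Om_0\times(\ep,T-\ep)$ from the pieces of $\pa Q$ on which no data are available, and then to optimize in $s$ via a standard Hölder interpolation. Concretely, I first pick a relatively open $\Ga'$ with $\pa\Om_0\cap\pa\Om\subsetneq\Ga'\subsetneq\Ga$ and construct $d\in C^2(\ov{\Om_1})$ as in \eqref{con:d} with $\Ga$ replaced by $\Ga'$, so that $d\ge\de_*>0$ on the compact set $\ov{\Om_0}$. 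Taking $t_0:=T/2$ and selecting $\be>0$ appropriately (depending on $\ep,T,\de_*$ and $\max_{\ov\Om}d$), the weight $\psi_2(x,t)=d(x)-\be(t-t_0)^2+c_0$ admits level-set constants $c_1^*<c_2^*<c_3^*$ such that $\Om_0\times(\ep,T-\ep)\subset\{\psi_2\ge c_3^*\}$, while $(\pa\Om\setminus\Ga')\times(0,T)$ and the temporal-transition region of the cutoff both lie in $\{\psi_2\le c_1^*\}$. The cutoff $\chi_0(x,t)=\eta_1(d(x))\eta_2(t)$ is then chosen with $\eta_1\in C^\infty(\R)$ equal to $1$ on $\{d\ge\de_*\}$ and $0$ on $\{d\le\de_*/2\}$, and $\eta_2\in C_c^\infty(0,T)$ equal to $1$ on $[\ep,T-\ep]$, so that $\chi_0\equiv 1$ on the target and $\supp(\na_{x,t}\chi_0)\subset\{c_1^*\le\psi_2\le c_2^*\}$.

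Applying Theorem \ref{thm-CE<=0.75}, on the target $\chi_0\equiv 1$ and all Carleman polynomial weights $(s\vp_2)^\mu$ on the left side are $\ge Cs^{-2}$ for fixed $\la$; since $u(\cdot,0)=0$ makes Caputo and Riemann-Liouville coincide on $(0,1)$, the time derivative $\pa_t u=D_t^1 u$ occurs as the $j=k=4$ term, so the left side dominates the $H^{2,1}$-norm of $u$ on the target. Using $\vp_2\ge A_3:=\e^{\la c_3^*}$ there yields
\begin{equation*}
s^{-N}\e^{2sA_3}\|u\|^2_{H^{2,1}(\Om_0\times(\ep,T-\ep))}\le C\bigl(I_F+Low+Bdy\bigr).
\end{equation*}
It then remains to bound each piece on the right. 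For $I_F$, $\sum_{j=-1}^{2}|D_t^{j/4}F|^2\le\cD^2$ and $\vp_2\le A_4:=\e^{\la\max_{\supp\chi_0}\psi_2}$, giving $I_F\le Cs^{N'}\e^{2sA_4}\cD^2$. For $Low$, the derivatives of $\chi_0$ are supported where $\vp_2\le A_2:=\e^{\la c_2^*}$, and for $j\in\{-1,0,1,2\}$ the identity $D_t^{j/4}u=D_t^{(j-2)/4}(D_t^{1/2}u)$ (a Riemann-Liouville fractional integral, legitimate thanks to $u(\cdot,0)=0$) together with the analogous identity for spatial gradients gives $\|D_t^{j/4}u\|_{L^2(Q)}+\|\na D_t^{j/4}u\|_{L^2(Q)}\le CM$, whence $Low\le Cs\e^{2sA_2}M^2$. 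For $Bdy$, the cutoff kills the contributions on $(\pa\Om\setminus\Ga')\times(0,T)$ and on $\Om\times\{0,T\}$; on $\Ga'\times(0,T)\subset\Ga\times(0,T)$, the boundary quantities $D_t^{j/4}u$, $\na_x D_t^{j/4}u=D_t^{(j-2)/4}(\na_x D_t^{1/2}u)$ and $\pa_t D_t^{j/4}u=D_t^{(j+2)/4}(D_t^{1/2}u)$ are controlled in $L^2(\Ga\times(0,T))$ by $\|D_t^{1/2}u\|_{H^1(\Ga\times(0,T))}+\|\pa_\nu D_t^{1/2}u\|_{L^2(\Ga\times(0,T))}\le\cD$ through Riemann-Liouville composition, giving $Bdy\le C\e^{Cs}\cD^2$.

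Plugging these bounds in and fixing $\la$ large enough that $2A_3>C$ (so $C\e^{Cs}\cD^2$ is absorbed into $C\cD^2$), I would arrive at
\begin{equation*}
\|u\|^2_{H^{2,1}(\Om_0\times(\ep,T-\ep))}\le C\cD^2+Cs^{N''}\bigl(\e^{2s(A_4-A_3)}\cD^2+\e^{-2s(A_3-A_2)}M^2\bigr)
\end{equation*}
for all $s\ge\hat s$. Choosing $s^*:=(A_4-A_2)^{-1}\log(M/\cD)$ (the case $M\le\cD$ being trivial) balances the two bracketed terms and produces the desired Hölder bound with $\te=(A_3-A_2)/(A_4-A_2)\in(0,1)$. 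The main obstacle I foresee is the $Bdy$ step: the Carleman estimate produces $D_t^{j/4}u$ and $\na_{x,t}(D_t^{j/4}u)$ on $\pa Q$ for every $j\in\{-1,0,1,2\}$, whereas the hypothesis only prescribes $D_t^{1/2}u$ and $\pa_\nu D_t^{1/2}u$ on $\Ga$, so the careful use of Riemann-Liouville composition identities (justified by $u(\cdot,0)=0$) to reduce the former to the latter is the most delicate computation.
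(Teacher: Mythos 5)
Your overall machinery (apply Theorem \ref{thm-CE<=0.75} with $k=4$, reduce the $Low$ and $Bdy$ terms to $M$ and $\cD$ via the composition estimate $\|D_t^{\al}h\|\le C\|D_t^{\be}h\|$ for $h(0)=0$, then optimize in $s$) is the right one, but the geometric core of your plan is false, and it is exactly the point the paper's proof is built to circumvent. You fix $t_0=T/2$ and claim that, for a suitable $\be$, one can find levels $c_1^*<c_3^*$ with $\Om_0\times(\ep,T-\ep)\subset\{\psi_2\ge c_3^*\}$ while the temporal transition zone of $\chi_0$ lies in $\{\psi_2\le c_1^*\}$. This is impossible for any $\be>0$: take $x_0\in\ov{\Om_0}$ with $d(x_0)=\de_*$ and a point $x_1$ with $d(x_1)=\max_{\ov\Om}d>\de_*$ (where $\eta_1=1$). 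At the target corner, $\psi_2(x_0,\ep)=\de_*-\be(\f T2-\ep)^2+c_0$, while in the temporal transition zone just below $t=\ep$ one has $\psi_2(x_1,t)\to \max_{\ov\Om}d-\be(\f T2-\ep)^2+c_0$, which is strictly larger; the time-dependent part is the same at $t\approx\ep$, so no choice of $\be$ (or of $c_0$, $\la$) separates the two sets. Consequently your constants satisfy $A_2\ge A_3$, the factor $\e^{-2s(A_3-A_2)}$ in front of $M^2$ does not decay, and the interpolation step collapses ($\te\notin(0,1)$). The same problem already occurs with the purely spatial collar $\{\de_*/2\le d\le\de_*\}$ at times near $T/2$ versus the target at times near $\ep$.

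The paper avoids this by accepting that the weight $d(x)-\be(t-t_0)^2+c_0$ only yields an estimate \emph{locally in time} around $t_0$: with $\be\ep^2\sim\|d\|_{C(\ov{\Om_1})}$ and level sets $\mu_1<\mu_2<\mu_3$ spaced by $\|d\|/N$, one gets $\Om_0\times(t_0-\f{\ep}{\sqrt N},t_0+\f{\ep}{\sqrt N})\subset D_3$ while the cutoff transition and the relevant part of $\pa D_1$ sit below $\mu_2$, because the time window of the target is short enough that the temporal penalty $\be(t-t_0)^2<\be\ep^2/N$ stays below the spatial gap between consecutive level sets. This produces the H\"older bound on $\Om_0\times(t_0-\f{\ep}{\sqrt N},t_0+\f{\ep}{\sqrt N})$ with constants uniform in $t_0\in[\sqrt2\ep,T-\sqrt2\ep]$; the estimate on all of $\Om_0\times(\ep,T-\ep)$ is then obtained by sliding $t_0$ in steps of $\f{\ep}{\sqrt N}$ and summing the finitely many local estimates. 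Your proposal is missing this time-localization and covering step, and without it the single-shot argument cannot be repaired. Your treatment of $Low$, $Bdy$ and of the boundary data via Riemann--Liouville composition matches the paper's and is fine once the correct localized geometry is in place.
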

\begin{rem}
Here the generic constant $C$ and constant $\te$ depend on the choice of $\Om_0$, $\ep$ and the 
coefficients of the equation \eqref{equ-gov2}.
\end{rem}

Now on the basis of the above Carleman estimate, let us turn to considering another application: inverse source problem for the equation \eqref{equ-gov2} where the source term is in the form of 
$F(x,t) = R(x,t)f(x)$. Here again we consider the critical case:
\begin{equation}
\label{sy:ISP}
{(Lu)(x,t)} = \pa_t u + { q(x)\pa_t^{\f34}} u - \Delta u + B(x)\cdot\na u + {c(x)} u = R(x,t)f(x) \quad \mbox{in $Q$,}
\end{equation}
where $B(x):=(b_1(x), \cdots, b_n(x))$.
\begin{prob}[Inverse source problem]
\label{prob-ISP=0.75}
Fix an observation time $t_0\in(0,T)$. We intend to determine the spatially varying factor $f$ for given $R$ by 
measuring the data on some sub-boundary and the value of the solution $u$ at $t=t_0$.
\end{prob}

The measurements are as the same type as that in the case of a heat equation (See, e.g., \cite{Y09} for a 
similar inverse source problem to heat equation). In our problem, we deal with a parabolic equation with some
lower-order time-fractional derivative. The idea is to put the fractional derivative term into source term and 
give some suitable estimates. We have the following conditional stability result on a level set $\Om_{\ep} := \{x\in\Om: d(x) > \ep\}$ for any $\ep>0$.
\begin{thm}
\label{thm-ISP=0.75}
Assume that $R\in L^\infty(Q)$ with condition $R(\cdot,0) = 0$ in $\Om$ and $R$ satisfies
\begin{equation}
\label{con:R}
R(\cdot,t_0)\ne0 \quad on\ \ov{\Om},\qquad D_t^{\f12}R\in L^\infty(Q).
\end{equation}
Then {for any $\ep>0$} there exist constants $C>0$ and $\te\in (0,1)$ such that
$$
\|f\|_{L^2(\Om_{4\ep})} \le C\cD + CM^{1-\te}\cD^\te
$$ 
for all $u$ smooth enough and satisfying the equation \eqref{sy:ISP} with
\begin{equation}
\label{con:initial}
u(\cdot,0) = \pa_t u(\cdot,0) = 0 \quad in \ \Om.
\end{equation}
Here by $M$ and $\cD$ we denote a priori bound and measurements as follows
\begin{align*}
& M := \|f\|_{L^2(\Om)} + \left\|D_t^{\f32} u\right\|_{H^{1,0}(Q)}, \\ 
& \cD := \|u(x,t_0)\|_{H^2(\Om)} + \left\|D_t^{\f32} u\right\|_{H^1(\Ga\times (0,T))} 
+ \left\|D_t^{\f32} (\pa_{\nu} u)\right\|_{L^2(\Ga\times (0,T))},
\end{align*}
where the generic constant $C$ and constant $\te$ depend on $\ep$ and the coefficients in the equation \eqref{sy:ISP}.
\end{thm}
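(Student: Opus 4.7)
The strategy is to implement the Bukhgeim--Klibanov scheme adapted to the $\f34$-order fractional setting, exploiting the fact that the Carleman weight $\vp_2(x,\cdot)$ of Theorem \ref{thm-CE<=0.75} attains its unique, non-degenerate maximum in $t$ at $t=t_0$. First I would reduce the problem to a pointwise estimate at $t=t_0$: evaluating \eqref{sy:ISP} at $t=t_0$ and using $R(\cdot,t_0)\neq 0$ on $\ov\Om$ yields
$$
\|f\|_{L^2(\Om_{4\ep})}^2 \le C\bigl(\|u(\cdot,t_0)\|_{H^2(\Om)}^2 + \|\pa_t u(\cdot,t_0)\|_{L^2(\Om)}^2 + \|\pa_t^{\f34}u(\cdot,t_0)\|_{L^2(\Om)}^2\bigr).
$$
The first term is contained in $\cD^2$, so the proof reduces to estimating the two pointwise-in-$t$ norms by $C\cD+CM^{1-\te}\cD^\te$.

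I would then apply Theorem \ref{thm-CE<=0.75} directly to $u$ (with $\al=\f34$, so $k=4$ and $j_1=-1$, $j_k=2$), choosing a cut-off $\chi_0$ that is identically $1$ on a neighbourhood of $\ov{\Om_{4\ep}}\times\{t_0\}$ and supported in a slightly larger open set $D\subsetneq Q$. All source factors $D_t^{j/4}(Rf)=f\,D_t^{j/4}R$ for $j=-1,0,1,2$ are admissible under the given hypotheses: $R\in L^\infty$ gives $D_t^{-1/4}R\in L^\infty$ by boundedness of the Riemann--Liouville integral \eqref{def-RLI}, and $D_t^{1/4}R=D_t^{-1/4}(D_t^{1/2}R)\in L^\infty$ by the same mechanism combined with \eqref{con:R}. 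The boundary term $Bdy$ in \eqref{Bdy} is controlled by $C\cD^2\e^{Cs}$: on $\Ga\times(0,T)$ the derivatives $D_t^{j/4}u$ and $\na D_t^{j/4}u$ for $j\le 2$ are fractional time-integrals of $D_t^{\f32}u$, whose $H^1(\Ga\times(0,T))$-trace lies in $\cD$; on $\pa\Om\setminus\Ga$ the weight is bounded because $d\equiv 0$ there; and the contributions from $\ov\Om\times\{0,T\}$ are killed by $\chi_0$ (with $u(\cdot,0)=\pa_t u(\cdot,0)=0$ anyway).

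The two missing pointwise values are extracted via the Bukhgeim--Klibanov identity
$$
|h(x,t_0)|^2\,\e^{2s\vp_2(x,t_0)} = \int_0^{t_0}\pa_t\bigl(|h(x,t)|^2\,\e^{2s\vp_2(x,t)}\bigr)\,\dt,
$$
valid for any $h(x,\cdot)$ with $h(x,0)=0$. Applied with $h=D_t^{\f12}u$ (justified since \eqref{con:initial} forces $D_t^{\f12}u(\cdot,0)=0$), and then combined with Abel-integral representations of $\pa_tu(\cdot,t_0)$ and $\pa_t^{\f34}u(\cdot,t_0)$ in terms of $D_t^{\f12}u$ and $D_t^{\f32}u$, each missing norm is bounded by integrals of $|D_t^{\f12}u|^2$, $|D_t^{\f32}u|^2$ and $s|\pa_t\vp_2|\,|D_t^{\f12}u|^2$ weighted by $\chi_0^2\e^{2s\vp_2}$ --- all factors appearing on the LHS of the Carleman estimate applied to $u$.

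To close the argument, the Laplace-type bound
$$
\int_0^T \chi_0(x,t)^2\,\e^{2s\vp_2(x,t)}\,\dt \le Cs^{-1/2}\,\e^{2s\vp_2(x,t_0)}
$$
(which follows from the non-degenerate $t$-maximum of $\vp_2(x,\cdot)$ at $t_0$) reduces the $f^2$-source on the Carleman RHS to at most $Cs^{-1/2}\int_\Om f(x)^2\,\chi_0(x,t_0)^2\,\e^{2s\vp_2(x,t_0)}\,\dx$; together with the pointwise reduction it is absorbed into the LHS for $s$ large. Bounding $Low$ by $CsM^2\e^{2s\vp_2^*}$ with $\vp_2^*<\min_{\Om_{4\ep}}\vp_2(\cdot,t_0)$, which is achieved by appropriate choices of $\be$, $\la$ and the geometry of $D$ so that the transition region of $\chi_0$ avoids $\ov{\Om_{4\ep}}\times\{t_0\}$, we arrive at $\|f\|_{L^2(\Om_{4\ep})}^2\,\e^{2s\min\vp_2(\cdot,t_0)}\le C\cD^2\e^{Cs}+CsM^2\e^{2s\vp_2^*}$, and minimising in $s\ge\hat{s}$ produces the claimed H\"older interpolation (the case $\cD\ge M$ being trivial). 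The main obstacle is the fractional-calculus bookkeeping: each identification $\pa_t^\al u=D_t^\al u$, each commutation $D_t^\al\circ D_t^\be=D_t^{\al+\be}$, and the control of every $D_t^{j/4}R$ entering the source must be justified from \eqref{con:initial} and from the limited regularity $R,\,D_t^{\f12}R\in L^\infty$.
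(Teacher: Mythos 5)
Your overall skeleton (pointwise reduction at $t=t_0$ via $R(\cdot,t_0)\neq 0$, a Bukhgeim--Klibanov time-slice identity localized near $t_0$, absorption of the $|f|^2$ source using the non-degenerate maximum of $\vp_2(x,\cdot)$ at $t_0$, then optimization in $s$) is the right one and matches the paper. But there is a genuine gap at the central step: you apply Theorem \ref{thm-CE<=0.75} to $u$ itself, whereas the pointwise values you must extract are $\pa_t u(\cdot,t_0)$ and $\pa_t^{\f34}u(\cdot,t_0)$. With $k=4$ the left-hand side of the Carleman estimate for $u$ controls $D_t^{\f{j}4}u$ only for $j=-1,\dots,6$, i.e.\ up to $D_t^{\f32}u$; it contains neither $\pa_t^2u$ nor $D_t^{\f74}u$. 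A time-slice identity applied directly to $h=\pa_t u$ (or $h=D_t^{\f34}u$) produces exactly those missing derivatives under the space-time integral, so it cannot be closed. Your workaround --- apply the identity to $h=D_t^{\f12}u$ and then recover $\pa_t u(\cdot,t_0)$, $D_t^{\f34}u(\cdot,t_0)$ by Abel representations such as $\pa_t u(x,t_0)=\f1{\Ga(1/2)}\int_0^{t_0}(t_0-r)^{-1/2}D_t^{\f32}u(x,r)\,\d r$ --- does not repair this: the Abel integral runs over all of $(0,t_0)$, so multiplying by $\e^{2s\vp_2(x,t_0)}$ and comparing with the Carleman-weighted quantity $\int_0^{t_0}|D_t^{\f32}u(x,r)|^2\e^{2s\vp_2(x,r)}\,\d r$ forces a factor $\e^{2s(\vp_2(x,t_0)-\vp_2(x,r))}$, which grows like $\e^{Cs}$ away from $t=t_0$. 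You then only get a bound of the form $C\e^{Cs}M^2$ instead of $C\e^{-cs}M^2$, and the final minimization in $s$ no longer yields a H\"older estimate.

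The missing idea, which is how the paper proceeds, is to differentiate the equation \eqref{sy:ISP} in time and apply the Carleman estimate to $u_1:=\pa_t u$, which solves the same equation with source $(\pa_t R)f$; this is precisely where the compatibility condition $R(\cdot,0)=0$ and the second initial condition $\pa_t u(\cdot,0)=0$ in \eqref{con:initial} (which your argument never uses --- a warning sign) are needed so that $u_1$ has zero initial value and the fractional operators commute. The Carleman left-hand side for $u_1$ then controls $s^{-2}|\pa_t^2u|^2$, $s^{-1}|D_t^{\f74}u|^2$, $s^2|\pa_t u|^2$, $s^3|D_t^{\f34}u|^2$ on $Q_{2\ep}$, which are exactly the terms produced by the localized time-slice identities for $\pa_t u(\cdot,t_0)$ and $D_t^{\f34}u(\cdot,t_0)$ on $(t_0-\de_\ep,t_0)$; the boundary and lower-order terms are then controlled by the $D_t^{\f32}u$ traces in $\cD$ and by $M$ via \eqref{esti-alpha<beta}, as you intended. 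Without this differentiation step your proposal, as written, does not go through.
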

To the best knowledge of the authors, most of the existing literatures are focused on the uniqueness of the 
inverse problems for the time-fractional diffusion equation, see, e.g., \cite{CNYY}, \cite{JLLY16}, \cite{LIY15}, 
\cite{LY15}, \cite{Z16} and the references therein. This is a first attempt to attack the stability of the inverse 
source problem (Problem \ref{prob-ISP=0.75}) for the time-fractional advection-diffusion equation. Moreover, due to our 
methods, it is necessary to assume both solution and the time derivative of the solution vanish at the initial time 
although the Carleman estimate Theorem \ref{thm-CE<=0.75} used for deriving Theorem \ref{thm-ISP=0.75} holds true only provided the homogeneous initial value.

The rest of this paper is organized in three sections. 
In Section \ref{sec-order<0.5},  by regarding the fractional-order terms as non-homogeneous terms and applying 
the Carleman estimate for the parabolic equations, we will give a proof for Theorem \ref{thm-CE<0.5} in the 
case of the highest fractional order is strictly less than half, and then as a direct application, the conditional 
stability of a lateral Cauchy problem for the equation \eqref{equ-gov} stated in Theorem \ref{thm-LCP<0.5} will 
be established. 
In Section \ref{sec-order<=0.75}, we first finish the proof for Theorem \ref{thm-CE<=0.75} with a regular weight 
function which is usually used dealing with the problems in the parabolic equations, and on the basis of the 
Carleman type estimate in Theorem \ref{thm-CE<=0.75}, we will show that the solution continuously depends on 
Cauchy data and source term. Finally, concluding remarks are given in Section \ref{sec-rem}. 

\section{Carleman estimate for the sub-diffusion and its applications}
\label{sec-order<0.5}
In this section, we investigate the equation \eqref{equ-gov} with fractional order $\al_1<\f12$. We point out that 
the equation \eqref{equ-gov} can be regarded a parabolic type equation if we regard the lower fractional order 
terms as new {non-homogeneous} terms. Therefore, it is expected to employ the Carleman estimate for the 
parabolic equations to derive the Carleman estimate for our equation, which is the key idea in this section.
Owing to this treatment, in Section \ref{sec-CE<0.5}, we will give the proof of Theorem \ref{thm-CE<0.5}, while 
Theorem \ref{thm-LCP<0.5} will be proved as an application in Section \ref{sec-LCP<0.5}.

\subsection{Carleman estimate for the sub-diffusion}
\label{sec-CE<0.5}
In this subsection, recalling the notations $d\in C^2(\ov\Om)$ and $|\na d|\ne0$ on $\ov\Om$, and the function 
$\psi_1=d(x)-\be t^{2-2\al_1}$ with $\be>0$, we follow the arguments on pp.9-19 of the survey paper 
\cite{Y09} to prove the Carleman estimate \eqref{esti-CE<0.5}. We use the same notations, where we must 
modify locally because our choice of the time dependency of $\psi_1$ is different.

\begin{proof}[\bf Proof of Theorem \ref{thm-CE<0.5}]
It is sufficient for us to discuss the derivation of a Carleman estimate for 
$L_0 = \pa_t-\sum_{i,j=1}^n a_{ij}(x,t)\pa_i\pa_j$ with the new weight function {$\vp_1=\e^{\la\psi_1}$}. 
Namely
\begin{align*}
&\int_D\left\{\f{1}{s\vp_1}|\pa_t u|^2+s\la^2\vp_1|\na u|^2
+ s^3\la^4\vp_1^3 u^2 \right\} \e^{2s\vp_1} \dx\dt
\nonumber\\
\le& C\int_D |L_0u|^2 \e^{2s\vp_1} \dx\dt
+\e^{C(\la)s}\int_{\pa D} (|\na u|^2 + |u|^2) \d S \dt
+ \e^{C(\la)s}\int_{\pa D\setminus\Sigma_0} |\pa_t u|^2 \d S\dt
\end{align*}
for all {$s\ge s_0$} and all $u\in H^{2,1}(D)$.

We note
$$
\sg(x,t) = \sum_{i,j=1}^n a_{ij}(x,t) (\pa_id) \pa_jd,\quad (x,t)\in\ov Q
$$
and
$$
w(x,t)=\e^{s\vp_1(x,t)} u(x,t)
$$
and
\begin{align*}
Pw= \e^{s\vp_1} L_0(\e^{-s\vp_1} w)
=&\pa_t w - \sum_{i,j=1}^n a_{ij}\pa_j\pa_j w + 2s\la\vp_1\sum_{i,j=1}^n a_{ij}(\pa_i d)\pa_j w
\\
&-s^2\la^2\vp_1^2\sg w+s\la^2\vp_1\sg w+s\la\vp_1 w\sum_{i,j=1}^n a_{ij}\pa_i\pa_j d-s\la\vp_1 w(\pa_t \psi_1).
\end{align*}

Now we introduce a new operator $P_3$ which is defined by
$$
P_3w:= Pw + \left(s\la^2\vp_1\sg - s\la\vp_1\sum_{i,j=1}^n a_{ij}\pa_i\pa_j d + s\la\vp_1 (\pa_t \psi_1)\right) w,
$$
and we decompose $P_3$ into the parts $P_1$ and $P_2$, 
$$
{  P_3w }= P_1w + P_2w,
$$
where 
$$
P_1w=- \sum_{i,j=1}^n a_{ij}\pa_j\pa_j w -s^2\la^2\vp_1^2\sg w,
$$
and
$$
P_2w=\pa_t w + 2s\la\vp_1\sum_{i,j=1}^n a_{ij}(\pa_i d)\pa_j w + 2s\la^2\vp_1\sg w.
$$
From the above notations for $P,P_1,P_2,P_3$, it follows that 
\begin{align*}
&\left\|\e^{s\vp_1}L_0u + \left(s\la^2\vp_1\sg - s\la\vp_1\sum_{i,j=1}^n a_{ij}\pa_i\pa_j d + s\la\vp_1 (\pa_t \psi_1)\right) w\right\|_{L^2(D)}^2 
\\
=& \|P_1w+P_2w\|_{L^2(D)}^2 
\ge 2\int_D (P_1w)(P_2w) \dx\dt + \|P_2w\|_{L^2(D)}^2.
\end{align*}
We will estimate $\int_D |P_2w|^2 + 2(P_1w)(P_2w) \dx\dt$ from below. 
Firstly, we have
\begin{align}
\label{esti-P1P2}
\int_D (P_1w)(P_2w) \dx\dt 
=& -\sum_{i,j=1}^n \int_D a_{ij}(\pa_i\pa_jw)\pa_tw \dx\dt
-\sum_{i,j=1}^n \int_D a_{ij}(\pa_i\pa_jw) 2s\la\vp_1\sum_{k,l=1}^n a_{kl} (\pa_kd)\pa_lw\dx\dt
\nonumber\\
&-\sum_{i,j=1}^n \int_D a_{ij}(\pa_i\pa_jw) 2s\la^2\vp_1\sg w \dx\dt
-\int_D s^2\la^2\vp_1^2 \sg w \pa_t w \dx\dt
\nonumber\\
&-\int_D 2s^3\la^3\vp_1^3 \sg w \sum_{i,j=1}^n a_{ij} (\pa_i d) \pa_j w \dx\dt
-\int_D 2s^3\la^4\vp_1^3\sg^2 w^2 \dx\dt
=:\sum_{k=1}^6 J_k.
\end{align}
Now, applying the integration by parts and the symmetry of $\{a_{ij}\}$: $a_{ij}=a_{ji}$, 
we give the estimates of $J_k$, $k=1,\cdots,6$ separately.
\begin{align*}
&J_1
= -\sum_{i,j=1}^n \int_D a_{ij}(\pa_i\pa_j w) \pa_tw \dx\dt 
\\
=& \sum_{i,j=1}^n \int_D (\pa_ia_{ij})(\pa_j w)\pa_tw \dx\dt 
+\sum_{i,j=1}^n \int_D a_{ij}(\pa_j w)\pa_i\pa_tw \dx\dt 
-\sum_{i,j=1}^n \int_{\pa D} a_{ij}(\pa_j w)\nu_i \pa_tw\d \Sg.
\end{align*}
Here and henceforth $\nu:=(\nu_1,\cdots,\nu_n,\nu_{n+1})$ denotes the unit normal exterior
with respect to the boundary $\pa D$ of $D$. In particular, $\nu_{n+1}$ is the component in the time direction. {  By noting $\nu_i=0, \forall i=1,...,n$ on $\Sg_0$, then integration by parts yields}
\begin{align*}
J_1
=& \sum_{i,j=1}^n \int_D (\pa_ia_{ij})(\pa_j w)\pa_tw \dx\dt 
+\f 1 2 \sum_{i,j=1}^n \int_D a_{ij}\pa_t\big((\pa_i w)\pa_jw\big) \dx\dt
\\
&-\sum_{i,j=1}^n \int_{\pa D\setminus \Sigma_0} a_{ij}(\pa_j w)\nu_i \pa_tw {  \d \Sg}
\\
=& \sum_{i,j=1}^n \int_D (\pa_ia_{ij})(\pa_j w)\pa_tw \dx\dt 
-\f 1 2 \sum_{i,j=1}^n \int_D (\pa_ta_{ij})(\pa_i w)\pa_jw \dx\dt 
\\
&+\f 1 2 \sum_{i,j=1}^n \int_{\pa D} a_{ij}(\pa_i w)(\pa_jw) \nu_{n+1} {  \d \Sg}
-\sum_{i,j=1}^n \int_{\pa D\setminus \Sigma_0} a_{ij}(\pa_j w) \nu_i\pa_tw {  \d \Sg}.
\end{align*}
Thus
\begin{align*}
|J_1|\le&C\int_D |\na w||\pa_t w| \dx\dt 
+ C\int_D |\na w|^2 \dx\dt
+C \int_{\pa D} |\na w|^2 {  \d \Sg}
+ C\int_{\pa D\setminus \Sigma_0} |\na w| |\pa_tw| {  \d \Sg}
\\
\le& C\int_D |\na w||\pa_t w| \dx\dt 
+ C\int_D |\na w|^2 \dx\dt
+C \int_{\pa D} |\na w|^2 {  \d \Sg}
+ C\int_{\pa D\setminus \Sigma_0} |\pa_tw|^2 {  \d \Sg}.
\end{align*}
Since the Cauchy-Schwarz inequality implies that
$$
|\na w||\pa_tw| 
= s^{\f 1 2} \la^{\f 1 2} \vp_1^{\f 1 2} |\na w|s^{-\f 1 2} \la^{-\f 1 2} \vp_1^{-\f 1 2}  |\pa_t w|
\le \f 1 2 s\la\vp_1 |\na w|^2 + \f 1 2 \f{1}{s\la\vp_1} |\pa_tw|^2,
$$
we have 
\begin{align*}
|J_1|\le& C\int_D \f{1}{s\la\vp_1}|\pa_t w|^2 \dx\dt + C \int_D s\la\vp_1 |\na w|^2 \dx\dt \\
&+ C\int_{\pa D} |\na w|^2 {  \d \Sg} +C\int_{\pa D\setminus\Sigma_0} |\pa_t w|^2 {  \d \Sg}.
\end{align*}

Next similar to the argument on pp. 12-13 in \cite{Y09}, we have
\begin{align*}
J_2 
=& -\sum_{i,j=1}^n \sum_{k,l=1}^n
 \int_D 2s\la\vp_1 a_{ij}a_{kl} (\pa_kd)(\pa_lw) \pa_i\pa_jw \dx\dt
\\
=&2s\la\int_D \sum_{i,j=1}^n \sum_{k,l=1}^n
 \la (\pa_id)\vp_1 a_{ij}a_{kl} (\pa_kd)(\pa_lw) \pa_jw \dx\dt
\\
&+2s\la\int_D \sum_{i,j=1}^n \sum_{k,l=1}^n
\vp_1 \pa_i(a_{ij}a_{kl} \pa_kd)(\pa_lw) \pa_jw \dx\dt
\\
&+2s\la\int_D \sum_{i,j=1}^n \sum_{k,l=1}^n
 \vp_1 a_{ij}a_{kl} (\pa_kd)(\pa_i\pa_lw) \pa_jw \dx\dt
\\
&-2s\la\int_{\pa D} \sum_{i,j=1}^n \sum_{k,l=1}^n \vp_1 a_{ij} a_{kl} (\pa_kd) ( \pa_l w) (\pa_jw) \nu_i {  \d \Sg}.
\end{align*}
We have 
$$
\mbox{(first term)} = 2s\la^2\int_D \vp_1 \left| \sum_{i,j=1}^n a_{ij}(\pa_i d)\pa_j w \right|^2  \dx\dt \ge 0
$$
and
\begin{align*}
\mbox{(third term)}
=&s\la\int_D \vp_1\sum_{i,j=1}^n\sum_{k,l=1}^n a_{ij}a_{kl} (\pa_kd)\pa_l\big((\pa_iw) (\pa_jw)\big) \dx\dt
\\
=&s\la\int_{\pa D} \vp_1 \sum_{i,j=1}^n \sum_{k,l=1}^n a_{ij}a_{kl} (\pa_kd) (\pa_iw) (\pa_jw) \nu_l {  \d \Sg}
\\
&-s\la^2\int_D \vp_1\sum_{i,j=1}^n \sg a_{ij} (\pa_iw) \pa_jw \dx\dt
\\
&-s\la\int_D \vp_1\sum_{i,j=1}^n \sum_{k,l=1}^n  \pa_l\big(a_{ij}a_{kl} (\pa_kd) \big)(\pa_iw) \pa_jw \dx\dt,
\end{align*}
which imply
\begin{align*}
J_2\ge -\int_D s\la^2\vp_1\sg \sum_{i,j=1}^n a_{ij} (\pa_i w)\pa_j w \dx\dt
- C\int_D s\la\vp_1 | \na w|^2 \dx\dt
-C\int_{\pa D} s\la\vp_1|\na w|^2 {  \d \Sg}.
\end{align*}
\begin{align*}
J_3 
=& 2\int_D s\la^2\vp_1\sg \sum_{i,j=1}^n a_{ij} (\pa_iw)\pa_jw \dx\dt
+ 2\int_D s\la^2 \sum_{i,j=1}^n \pa_i(\vp_1\sg a_{ij})  w\pa_jw \dx\dt
\nonumber\\
&- 2\int_{\pa D} s\la^2\vp_1\sg \sum_{i,j=1}^n a_{ij} w(\pa_jw)\nu_i {  \d \Sg}
\nonumber\\
 \ge& 2\int_D s\la^2\vp_1\sg \sum_{i,j=1}^n a_{ij} (\pa_iw)\pa_jw \dx\dt
- C\int_D s\la^3\vp_1 | \na w| | w| \dx\dt
- C\int_{\pa D} s\la^2\vp_1 | \na w| | w| {  \d \Sg}.
\end{align*}
By 
$$
s\la^3\vp_1 |\na w| |w| = (s\la^2\vp_1|w|)(\la |\na w|) 
\le \f 1 2 s^2\la^4\vp_1^2 w^2 + \f 1 2 \la^2|\na w|^2,
$$
and
$$
s\la^2\vp_1 |\na w| |w| = (s\la^{\f 3 2}\vp_1|w|)(\la^{\f 1 2} |\na w|) 
\le \f 1 2 s^2\la^3\vp_1^2 w^2 + \f 1 2 \la|\na w|^2,
$$
we have
\begin{align*}
J_3 
 \ge& 2\int_D s\la^2\vp_1\sg \sum_{i,j=1}^n a_{ij} (\pa_iw)\pa_jw \dx\dt
- C\int_D s^2\la^4\vp_1^2 w^2 \dx\dt
- C\int_D \la^2 |\na w|^2 \dx\dt
\\
&- C\int_{\pa D} \la | \na w|^2 {  \d \Sg}
- C\int_{\pa D} s^2\la^3\vp_1^2 w^2 {  \d \Sg}.
\end{align*}

\begin{align*}
&|J_4| 
=\left| -\f 1 2\int_D s^2\la^2\vp_1^2 \sg \pa_t(w^2) \dx\dt\right| \\
=& \left|\int_D s^2\la^3\vp_1^2\be(2\al_1-2)t^{1-2\al_1}\sg w^2 \dx\dt
+\f 1 2 \int_D s^2\la^2\vp_1^2(\pa_t\sg) w^2 \dx\dt
-\f 1 2 \int_{\pa D} s^2\la^2\vp_1^2\sg w^2\nu_{n+1} {  \d \Sg} \right|
\\
\le& C\int_D s^2\la^3\vp_1^2 w^2 \dx\dt +C\int_{\pa D}s^2\la^2\vp_1^2w^2 {  \d \Sg} .
\end{align*}
\begin{align*}
J_5
=& -\int_D s^3 \la^3 \vp_1^3 \sum_{i,j=1}^n \sg a_{ij}(\pa_i d)\pa_j(w^2) \dx\dt \\
=&3\int_D s^3\la^4\vp_1^3\sg^2w^2 \dx\dt
+\int_D s^3\la^3\vp_1^3\sum_{i,j=1}^n \pa_j(\sg a_{ij} \pa_id) w^2\dx\dt \\
&-\int_{\pa D} s^3\la^3\vp_1^3 \sum_{i,j=1}^n \sg a_{ij} (\pa_i d)w^2\nu_j {  \d \Sg} \\
\ge& 3\int_D s^3\la^4\vp_1^3\sg^2 w^2 \dx\dt  - C\int_D s^3\la^3\vp_1^3w^2 \dx\dt
-C\int_{\pa D} s^3\la^3\vp_1^3 w^2 {  \d \Sg}.
\end{align*}
\begin{align*}
J_6 = -\int_D 2s^3 \la^4 \vp_1^3 \sg^2 w^2 \dx\dt.
\end{align*}
By the definition of $P_2$, we have
\begin{align*}
\ep\int_D \f1{s\vp_1}|\pa_tw|^2 \dx\dt
=&\ep\int_D \f1{s\vp_1}
\left| P_2w - 2s\la\vp_1\sum_{i,j=1}^n a_{ij}(\pa_id)\pa_jw - 2s\la^2\vp_1\sg w\right|^2 \dx\dt \\
\le& C\int_D |P_2w|^2 \dx\dt  + C\ep\int_D s\la^2\vp_1 |\na w|^2 \dx\dt + C\ep\int_D s\la^4\vp_1 w^2 \dx\dt.
\end{align*}
By summing up all the above estimates for $J_k$, $k=1,\cdots,6$, we find
\begin{align*}
&\int_D s^3\la^4\vp_1^3\sg^2 w^2 \dx\dt
+ \int_D s\la^2\vp_1\sg \sum_{i,j=1}^n a_{ij} (\pa_i w)\pa_j w \dx\dt
+ \left(\ep-\f{C}{\la}\right)\int_D \f{1}{s\vp_1}|\pa_tw|^2 \dx\dt \\
\le& C\int_D |L_0u|^2e^{2s\vp_1} \dx\dt 
+ C\int_D (s\la\vp_1 + \ep s\la^2\vp_1 + \la^2)|\na w|^2 \dx\dt
+ C\int_D (s^3\la^3\vp_1^3+s^2\la^4\vp_1^2)w^2 \dx\dt \\
&+C\int_{\pa D} (s\la\vp_1 |\na w|^2 + s^3\la^3\vp_1^3 w^2) {  \d \Sg} 
+C\int_{\pa D\setminus\Sg_0} |\pa_tw|^2 {  \d \Sg},
\end{align*}
which combined with the ellipticity of $a_{ij}$ and $\sg_0:=\inf_{(x,t)\in Q} \sg(x,t)>0$ yields that
\begin{align*}
&\int_D s^3\la^4\vp_1^3\sg_0^2w^2 dxdt
+\int_D {(\sg_0\rho-C\ep)} s\la^2\vp_1 |\na w|^2 \dx\dt 
+ \left(\ep-\f{C}{\la}\right)\int_D \f{1}{s\vp_1}|\pa_tw|^2 \dx\dt
\\
\le& C\int_D |L_0u|^2e^{2s\vp_1} \dx\dt
+ C\int_D (s^3\la^3\vp_1^3 + s^2\la^4\vp_1^2) w^2 \dx\dt 
+ C\int_D (s\la\vp_1+\la^2) |\na w|^2 \dx\dt
\nonumber\\
&+\int_{\pa D} (s\la\vp_1|\na w|^2 + s^2\la^3\vp_1^2w^2) {  \d \Sg}
+\int_{\pa D\setminus\Sg_0} |\pa_t w|^2 {  \d \Sg}.
\end{align*}
Thus choosing $\ep>0$ small, and choosing $\la$ and then $s$
large, we can absorb terms suitably to obtain
\begin{align*}
&\int_D \left\{\f{1}{s\vp_1}| \pa_tw|^2 + s\la^2\vp_1| \na w|^2+ s^3\la^4\vp_1^3w^2 \right\} \dx\dt
\\
\le& C\int_D |L_0u|^2 \e^{2s\vp_1} \dx\dt
+\int_{\pa D} (s\la\vp_1|\na w|^2 + s^2\la^3\vp_1^2w^2) {  \d \Sg}
+\int_{\pa D\setminus\Sg_0} |\pa_t w|^2 {  \d \Sg}.
\end{align*}
Noting $w=u\e^{s\vp_1}$, we have
\begin{align*}
&\int_D \left\{\f{1}{s\vp_1}| \pa_tu|^2 + s\la^2\vp_1| \na u|^2+ s^3\la^4\vp_1^3u^2 \right\} \e^{2s\vp_1}\dx\dt
\\
\le& C\int_D |L_0u|^2 \e^{2s\vp_1} \dx\dt
+{C(\la)}\e^{C(\la)s}\int_{\pa D} (|\na u|^2 + u^2) {  \d \Sg}
+{C(\la)}\e^{C(\la)s}\int_{\pa D\setminus\Sg_0} |\pa_t u|^2 {  \d \Sg},
\end{align*}
which completes the proof of the theorem.
\end{proof}

\subsection{Application to a lateral Cauchy problem for the sub-diffusion}
\label{sec-LCP<0.5}
In this section, we will give a proof of Theorem \ref{thm-LCP<0.5}. To prove this, we follow the usual argument 
used in analyzing the lateral Cauchy problem for the parabolic equations, that is, we use the Carleman estimate 
derived in Section \ref{sec-CE<0.5}. The first problem which we have to overcome is to evaluate the fractional derivative by the first order time-derivative under some suitable norm.  
Namely, the following lemma holds.
\begin{lem}
\label{lem-esti-frac}
Let $T>0$ and $0<\al\le\al_1<\f 1 2$ be given constants, 
then the following inequality
\begin{equation}
\label{esti-frac}
\int_{\cC_1} |\pa_t^\al u|^2 \e^{2s\vp_1} \dx\dt
\le C\int_{\cC_2}  \f1{s\la\vp_1}|\pa_t u|^2 \e^{2s\vp_1}\dx\dt
\end{equation}
holds true for all $u\in H^{2,1}(Q)$, where $\vp_1=\e^{\la\psi_1}$ with $\psi_1(x,t)=d(x)-\be t^{2-2\al_1}$, and 
$\cC_i:=\{(x,t);\ x\in\ov\Om,\ t>0,\ \vp_1(x,t)>c_i\}$, $i=1,2$, and $c_i$ are positive constants such that 
$c_2<c_1$.
\end{lem}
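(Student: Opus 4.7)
The plan is to exploit the convolution structure of the Caputo derivative together with the monotonicity of $\vp_1$ in $t$ and a sharp lower bound for the decay of $\vp_1$. First, apply the Cauchy--Schwarz inequality directly to the integral defining the Caputo derivative (splitting the kernel as $(t-\tau)^{-\al} = (t-\tau)^{-\al/2}\cdot(t-\tau)^{-\al/2}$):
$$
|\pa_t^\al u(x,t)|^2 \le \f{1}{\Ga(1-\al)^2}\int_0^t (t-\tau)^{-\al}\d\tau \cdot \int_0^t (t-\tau)^{-\al}|\pa_\tau u(x,\tau)|^2\d\tau \le C_T\int_0^t (t-\tau)^{-\al}|\pa_\tau u|^2\d\tau,
$$
where the first factor is finite because $\al<1$.

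Next, multiply by $\e^{2s\vp_1(x,t)}$ and write $\e^{2s\vp_1(x,t)} = \e^{2s\vp_1(x,\tau)}\e^{-2s(\vp_1(x,\tau)-\vp_1(x,t))}$. Since $\pa_t\psi_1 = -\be(2-2\al_1)t^{1-2\al_1}<0$, the function $\vp_1$ is strictly decreasing in $t$, so the monotonicity is in the right direction, and moreover
$$
\vp_1(x,\tau)-\vp_1(x,t) = \int_\tau^t \la\vp_1(x,\sigma)\be(2-2\al_1)\sigma^{1-2\al_1}\d\sigma \ge c_2\la\be\,(t^{2-2\al_1}-\tau^{2-2\al_1}) \ge c_2\la\be\,(t-\tau)^{2-2\al_1},
$$
where I use $\vp_1(x,\sigma)\ge \vp_1(x,t)\ge c_2$ in the integrand (by monotonicity and the definition of $\cC_2$) together with the elementary inequality $a^p-b^p\ge (a-b)^p$ valid for $p=2-2\al_1\ge 1$ and $0\le b\le a$.

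Then I apply Fubini to interchange the orders of integration in $(t,\tau)$. The crucial geometric observation is that whenever $(x,t)\in\cC_1$ and $0<\tau<t$, one has $(x,\tau)\in\cC_2$ because $\vp_1(x,\tau)\ge\vp_1(x,t)>c_1>c_2$; this legitimately restricts the $\tau$-integration to $\cC_2$. The remaining inner integral is
$$
\int_{\tau}^{T} (t-\tau)^{-\al}\e^{-2sc_2\la\be(t-\tau)^{2-2\al_1}}\d t,
$$
which, after the change of variable $z = 2sc_2\la\be(t-\tau)^{2-2\al_1}$, becomes $C(s\la)^{-(1-\al)/(2-2\al_1)}$ times a Gamma-function integral that is finite precisely because $\al\le\al_1<\f12$ ensures $(1-\al)/(2-2\al_1)>0$.

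The main obstacle is to convert the resulting prefactor into exactly the weighted form $\f{1}{s\la\vp_1(x,\tau)}$ demanded by the statement: the change of variable naturally produces a factor $(s\la)^{-(1-\al)/(2-2\al_1)}$ with exponent in $(1/2,1)$, not $(s\la\vp_1)^{-1}$. I would address this by splitting the inner integral into the region near the diagonal $\{t\approx\tau\}$ (where the quadratic lower bound is used) and the region far from the diagonal (where the full exponential decay $\e^{-2s\cdot c_2}$ controls the singularity of $(t-\tau)^{-\al}$, giving an arbitrary negative power of $s\vp_1(x,\tau)$ by $\e^{-x}\le C_n x^{-n}$), then using $\vp_1\ge c_2$ on $\cC_2$ and the strict inequality $\al_1<\f12$ to absorb the mismatch in powers and assemble the factor $\f{1}{s\la\vp_1(x,\tau)}$ on the right-hand side.
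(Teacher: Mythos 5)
Your symmetric Cauchy--Schwarz splitting $(t-\tau)^{-\al}=(t-\tau)^{-\al/2}\cdot(t-\tau)^{-\al/2}$ is exactly where the argument breaks, and the mismatch you flag at the end is not an absorbable technicality. After Fubini, your route needs the kernel estimate
\[
\int_\tau^T (t-\tau)^{-\al}\,\e^{2s\vp_1(x,t)}\dt\;\le\;\f{C}{s\la\vp_1(x,\tau)}\,\e^{2s\vp_1(x,\tau)},
\]
and this is false near $\tau=0$: since $\pa_t\psi_1=-\be(2-2\al_1)t^{1-2\al_1}$ degenerates as $t\to0$, on the interval $0<t\le \big(2s\la\be\e^{\la d(x)}\big)^{-1/(2-2\al_1)}$ one has $2s\big(\vp_1(x,0)-\vp_1(x,t)\big)\le 1$, so for $\tau=0$ the left-hand side is at least $c\,\e^{2s\vp_1(x,0)}\,s^{-\f{1-\al}{2-2\al_1}}$ (with $\la$ fixed), whereas the right-hand side is of order $\e^{2s\vp_1(x,0)}s^{-1}$; since $\f{1-\al}{2-2\al_1}<1$ strictly (as $\al>0$, $\al_1<\f12$), the required bound fails for large $s$. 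Testing with $\pa_t u$ concentrated near $t=0$ then shows that the integrated inequality you would need after your first Cauchy--Schwarz step is itself false, so no near/far-diagonal splitting can reassemble the weight $1/(s\la\vp_1)$; even away from $t=0$ your computation only yields $(s\la)^{-(1-\al)}$. The full power matters: in the application (Theorem 1.2) the fractional terms are absorbed into the $\f{1}{s\vp_1}|\pa_t u|^2$ term of the Carleman estimate precisely through the extra factor $\la^{-1}$, and a bound of the form $(s\la\vp_1)^{-\te}$ with $\te<1$ could not be absorbed as $s\to\infty$.

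The paper avoids this loss by an \emph{asymmetric} Cauchy--Schwarz: the whole kernel squared goes into one factor, $\big|\int_0^t(t-r)^{-\al}\Phi^{1/2}\pa_r u\,\d r\big|^2\le\f{t^{1-2\al}}{1-2\al}\int_0^t\Phi|\pa_r u|^2\d r$ --- this is where $\al<\f12$ is used --- so that no singular kernel remains inside, and the prefactor $t^{1-2\al}\le T^{2(\al_1-\al)}t^{1-2\al_1}$ exactly matches the degeneration of the weight: by the identity $t^{1-2\al_1}\e^{2s\vp_1}=-\f{1}{4\be(1-\al_1)s\la\vp_1}\pa_t(\e^{2s\vp_1})$, one integration by parts in $t$ converts the whole expression into $\f{\Phi}{s\la\vp_1}|\pa_t u|^2\e^{2s\vp_1}$ plus a remainder that is absorbed for $s$ large. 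Your localization step (that $(x,\tau)\in\cC_2$ for $\tau<t$ by monotonicity of $\vp_1$ in $t$) plays the same role as the paper's cut-off $\Phi$ and is fine; the missing idea is to keep the factor $t^{1-2\al}$ from the Cauchy--Schwarz step and trade it, via the exact-derivative identity and integration by parts, for the weight $\f{1}{s\la\vp_1}$.
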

\begin{proof}
We choose a nonnegative function {$\Phi\in L^\infty(\R^{n+1})$} such that supp$\Phi\subset \cC_2$ and $\Phi\equiv1$ in $\cC_1$. Thus we have
$$
\int_{\cC_1} |\pa_t^\al u|^2 \e^{2s\vp_1} \dx \dt
=\int_{\cC_1} \Phi(x,t) |\pa_t^\al u|^2 \e^{2s\vp_1} \dx \dt,
$$
which combined with the definition of the Caputo derivative implies that
$$
\int_{\cC_1} {  \Phi(x,t)}|\pa_t^\al u|^2 \e^{2s\vp_1} \dx \dt
 {  =} \int_{\cC_1}\left|\f{\Phi^{\f 1 2}(x,t)}{\Ga(1-\al)} \int_0^t(t-r)^{-\al} \pa_r u(x,r)\d r\right|^2
 \e^{2s\vp_1}\dx\dt.
$$
Moreover, since for any fixed $x\in\Om$, $\vp_1(x,t)$ is decreasing with respect to the variable $t>0$, we see that $\vp_1(x,t)\le\vp_1(x,r)$ for any $x\in\Om$ and $0<r\le t$, so that $(x,t)\in\cC_1$ implies that $(x,r)\in\cC_1$ for $0<r\le t$, hence that $\Phi(x,r)=1$ if $(x,t)\in\cC_1$ and $0<r<t$,
finally we have
\begin{align*}
\int_{\cC_1} \Phi(x,t)|\pa_t^\al u|^2 \e^{2s\vp_1} \dx \dt
{  =} &\int_{\cC_1}\left|\f1{\Ga(1-\al)} \int_0^t(t-r)^{-\al} \Phi^{\f12}(x,r)\pa_r u(x,r)\d r\right|^2 \e^{2s\vp_1}\dx\dt
 \\
\le& \int_Q \left|\f1{\Ga(1-\al)} \int_0^t(t-r)^{-\al} \Phi^{\f12}(x,r)\pa_r u(x,r)\d r\right|^2 \e^{2s\vp_1}\dx\dt.
\end{align*}
Now by noting that $\vp_1(x,t) \ge c_0$, where $c_0>0$ is a constant, we have 
$$
\pa_t\psi_1 = \be(2\al_1-2) t^{1-2\al_1}, \quad
\pa_t\vp_1 = \la(\pa_t\psi_1)\vp_1 = (2\al_1-2)\be\la\vp_1 t^{1-2\al_1}.
$$
Hence
\begin{equation}
\label{equ-phi}
t^{1-2\al_1}e^{2s\vp_1}
=-\f{1}{4\be s\la\vp_1(1-\al_1)}\pa_t(e^{2s\vp_1}).
\end{equation}
By the Cauchy-Schwarz inequality and (3), we have
\begin{align*}
&\int^T_0 \left| \int^t_0 (t-r)^{-\al}\Phi^{\f12}(x,r) \pa_ru(r) \d r\right|^2 
\e^{2s\vp_1(x,t)} \dt
\\
\le& \int^T_0 \left(\int^t_0 (t-r)^{-2\al} \d r\right)
\left(\int^t_0 \Phi(x,r)|\pa_ru(r)|^2 \d r \right) \e^{2s\vp_1(x,t)} \dt\\
=& \f{1}{1-2\al}\int^T_0 t^{1-2\al}
\left(\int^t_0 \Phi(x,r)|\pa_r u(r)|^2 \d r\right) \e^{2s\vp_1(x,t)} \dt
\end{align*}
Moreover, since $0<\al<\al_1$, we see that
\begin{align*}
&\int_0^T \left| \int^t_0 (t-r)^{-\al} \Phi^{{  \f 1 2}}(x,r)\pa_r u(r) \d r\right|^2 
\e^{2s\vp_1} \dt
\le \f{T^{2(\al_1-\al)}}{1-2\al}\int_0^T t^{1-2\al_1}
\left(\int_0^t \Phi(x,r)|\pa_r u(r)|^2 \d r\right) \e^{2s\vp_1} \dt.
\end{align*}
Now from the formula \eqref{equ-phi}, integration by parts implies 
\begin{align*}
&\int_0^T t^{1-2\al_1}
\left(\int_0^t \Phi(x,r)|\pa_r u(r)|^2 \d r\right) \e^{2s\vp_1(x,t)} \dt
\\
= &\f{1}{1-\al_1} \left(\f{-1}{4\be s\la\vp_1} \int_0^t \Phi(x,r)|\pa_r u(r)|^2 \d r \right) \e^{2s\vp_1(x,t)} \Big|_{t=0}^{t=T} 
+\f{1}{1-\al_1}\int_0^T \f{\Phi(x,t)}{4\be s\la\vp_1}|\pa_t u|^2 
\e^{2s\vp_1} \dt
\\
+& \int_0^T \f{t^{1-2\al_1}}{2s\vp_1}
\left(\int_0^t \Phi(x,r)|\pa_ru(r) |^2 \d r \right) \e^{2s\vp_1}\dt
\\
\le&  \f{1}{1-\al_1}\int_0^T \f{\Phi(x,t)}{4\be s\la\vp_1}|\pa_t u|^2 \e^{2s\vp_1} \dt
+\int_0^T \f{t^{1-2\al_1}}{2s\vp_1}
\left(\int^t_0 \Phi(x,r)|\pa_r u(r)|^2 dr \right) \e^{2s\vp_1}\dt.
\end{align*}
The last term on the right-hand side can be absorbed 
into the left-hand side by choosing $s>0$ large and 
we have
$$
\int_0^T t^{1-2\al_1} 
\left(\int_0^t \Phi(x,r) |\pa_r u(r)|^2 \d r\right) \e^{2s\vp_1(x,t)} \dt 
\le C\int_0^T \f{\Phi(x,t)}{s\la\vp_1} |\pa_t u(t)|^2 \e^{2s\vp_1} \dt.
$$
Thus
$$
\int_Q t^{1-2\al_1} 
\left(\int_0^t \Phi(x,r) |\pa_r u(r)|^2 \d r\right) \e^{2s\vp_1(x,t)} \dx\dt 
\le C\int_Q \f{\Phi(x,t)}{s\la\vp_1} |\pa_t u(t)|^2 \e^{2s\vp_1} \dx\dt,
$$
which combined with the fact supp$\Phi\subset \cC_2$ implies that
$$
\int_{\cC_1} |\pa_t^{\al} u|^2 \e^{2s\vp_1} \dx\dt
\le C\int_{\cC_2} \f1{s\la\vp_1} | \pa_tu|^2 \e^{2s\vp_1} \dx\dt,
$$
which completes the proof of the lemma.
\end{proof}


Before giving the proof of Theorem \ref{thm-LCP<0.5}, we introduce some notations.

For arbitrary given domain $\Om_0$ such that $\ov{\Om_0}\subset\Om$, 
similar to Theorem 5.1 in \cite{Y09}, 
we will choose a suitable weight function $\psi_1(x,t):=d(x)-\be t^{2-2\al_1}$. 
For this, we first choose a bounded domain $\Om_1$ with smooth boundary such that
$$
\Om\subsetneq\Om_1,
\quad \ov\Ga=\ov{\pa\Om\cap\Om_1}, 
\quad \pa\Om\setminus\Ga\subset \pa\Om_1.
$$
We then apply Lemma 4.1 in \cite{Y09} to obtain 
$d\in C^2(\ov{\Om_1})$ satisfying
$$
d(x)>0,\ x\in\Om_1,
\quad d(x)=0,\ x\in\pa\Om_1,
\quad |\na d(x)|>0,\ x\in\ov\Om.
$$
Then we can choose $\be>0$ such that

\begin{equation}
\label{condi-beta}
\be(\tfrac{T}2)^{2-2\al_1} < \|d\|_{C(\ov{\Om_1})}<\be T^{2-2\al_1}.
\end{equation}
Moreover, since $\ov{\Om_0}\subset\Om_1$, we can choose 
a sufficiently large $N>1$ such that
\begin{equation}
\label{condi-Omega_0}
\Om_0\subset 
\ov\Om \cap \{x\in\Om_1;\ 
d(x)> \f4N \|d\|_{C(\ov{\Om_1})}\}.
\end{equation}

We set $\mu_k=\exp\{\la(\f k N \|d\|_{C(\ov{\Om_1})}-\f{\be(\frac{T}2)^{2-2\al_1}}N)\}$, and 
$D_k:=\{(x,t);\ x\in\ov\Om,\ t>0,\ \vp(x,t)>\mu_k\}$, $k=1,2,3,4$. Then we can verify from \eqref{condi-beta} and 
\eqref{condi-Omega_0} that
\begin{equation}
\label{condi-D}
\Om_0\times(0,\f{T}{2M}) {  \subset D_4}\subset D_3\subset D_1
\subset \ov\Om \times (0,T),
\end{equation}
where $M:=N^{\f1{2-2\al_1}}$, and 
\begin{equation}
\label{condi-boundary_of_D}
\pa D_1 \subset \Sigma_0\cup\Sigma_1\cup\Sigma_2
\end{equation}
{are} valid. Here $\Sigma_0=\{(x,0);\ x\in\ov\Om\}$, $\Sigma_1\subset \Ga\times(0,T)$
and $\Sigma_2=\{(x,t);\ x\in\Om,\ t>0, \vp(x,t)=\mu_1\}$.

\bigskip

Now we are ready to give the proof of our main theorem.

\begin{proof}[\bf Proof of Theorem \ref{thm-LCP<0.5}]
We start from the Cauchy problem
$$
\left\{
\begin{alignedat}{2}
&u(x,t) = g_0(x,t) &\quad& \mbox{on $\Ga\times(0,T]$},\\
&\pa_{\nu_A} u(x,t) = g_1(x,t) &\quad& \mbox{on $\Ga\times(0,T]$}
\end{alignedat}
\right.
$$
for the equation \eqref{equ-gov}. 

Henceforth $C>0$ denotes generic constants depending on $\la$, 
but independent of $s$ and the choice of $g_0$, $g_1$, $u$. 
For it, we need a cut-off function because we have no data 
$\pa_{\nu_A} u$ on $\pa D\setminus\Ga\times(0,T)$. Let 
$\chi\in C^\infty(\R^{n+1})$ such that $0\le\chi\le1$ and
\begin{equation}
\label{condi-chi}
\chi(x,t)=
\begin{cases}
1, &\vp_1(x,t)>\mu_3,\\
0, &\vp_1(x,t)<\mu_2.
\end{cases}
\end{equation}

Setting $v:=\chi u$, $\wt L:=L-\sum_{j=1}^\ell q_j\pa_t^{\al_j}$, 
and then using Leibniz's formula for the differential of the 
product we have
\begin{equation}
\label{equ-v}
\wt Lv{  = \chi\wt Lu +A_1 u}= \chi L u - \chi\sum_{j=1}^\ell q_j\pa_t^{\al_j} u+A_1 u
=\chi F- \chi\sum_{j=1}^\ell q_j\pa_t^{\al_j} u+A_1 u.
\end{equation}
Here the last term $A_1 u$ involves only the linear combination 
of $(\pa_t\chi) u$, $(\pa_i\pa_j\chi) u$, $(\pa_i\chi)(\pa_j  u)$ 
and $(\pa_i\chi) u$, $i,j=1,\cdots,n$. 

By \eqref{condi-boundary_of_D} and \eqref{condi-chi}, 
we see that $v=|\na v|{  =\pa_t v}=0 $ on $\Sigma_2$. 
Hence using the Carleman estimate in Theorem \ref{thm-CE<0.5},
from $D_3\subset D_1$ by an argument similar to Theorem 3.2 
in \cite{Y09} in $D_1$ to \eqref{equ-v}, we find
\begin{align}
\label{esti-carleman}
&\int_{D_3}\left\{\f 1 {s{  \vp_1}} |\pa_t v|^2 
+s\la^2\vp_1|\na v|^2 +s^3\la^4\vp_1^3v^2\right\}\e^{2s\vp_1}\dx\dt 
\nonumber\\
\le&\int_Q F^2 \e^{2s\vp_1}\dx\dt
+C\int_{D_1}\sum_{j=1}^\ell \chi^{  2} (x,t)|\pa_t^{\al_j}u|^2\e^{2s\vp_1}\dx\dt
+C\int_{D_1} |A_1u|^2 \e^{2s\vp_1} \dx\dt
\nonumber\\
&+\e^{C(\la)s} \int_{\Sg_0\cup(\Ga\times(0,T))} (|\na v|^2+v^2){  \d \Sg}
+ \e^{C(\la)s} \int_{\Ga\times(0,T)} |\pa_t v|^2 \d S \dt.
\end{align}
for all $s\ge s_0$ and $\la\ge\la_0$. 

By \eqref{condi-chi}, $A_1u$ does not vanish only if 
$\mu_2\le\vp(x,t)\le \mu_3$ and so
$$
\int_{D_1} |A_1u|^2 \e^{2s\vp_1} \dx\dt
\le C\e^{2s\mu_3}\|u\|_{H^{1,0}(Q)}^2.
$$
Moreover, from \eqref{condi-D} and Lemma \ref{lem-esti-frac}, letting $\cC_1=\left\{(x,t); x\in\ov\Om, t>0, \vp_1(x,t) > \f{\mu_3+\mu_4}2\right\}$ and $\cC_2=D_3$ in Lemma \ref{lem-esti-frac}, for $\la$ being large {  fixed}, we conclude that the integration
$
\int_{\cC_1} \sum_{j=1}^\ell \chi^{  2}(x,t)|\pa_t^{\al_j} u|^2 \e^{2s\vp_1}\dx\dt
$
can be absorbed by the left-hand side of \eqref{esti-carleman} , which implies
\begin{align*}
&\int_{D_3}\left\{\f1{s\vp_1} |\pa_t u|^2
+s \vp_1 |\na u|^2 +s^3 \vp_1^3 u^2\right\} \e^{2s\vp_1}\dx\dt 
\\
\le& C\e^{Cs}\|F\|_{L^2(Q)}
+C\e^{2s\mu_3}\|u\|_{H^{1,0}(Q)}^2
+C\sum_{j=1}^\ell \int_{D_1\setminus \cC_1} |\pa_t^{\al_j}u|^2 \e^{2s\vp_1} \dx\dt 
\nonumber\\
&+\e^{Cs} \int_{\Sg_0\cup(\Ga\times(0,T))} (|\na v|^2+v^2) {  \d \Sg}
+ \e^{Cs} \int_{\Ga\times(0,T)} |\pa_t v|^2\d S\dt.
\end{align*}

By \eqref{condi-Omega_0}, we can directly verify that 
$\vp_1(x,t)\le\f{\mu_3+\mu_4}2$ in $D_1\setminus \cC_1$, and if 
$(x,t)\in\Om_0\times(0,\ve)$, then $\vp_1(x,t)>\mu_4$. 
Then combined with \eqref{condi-D} and \eqref{condi-boundary_of_D},  
{  by H\"{o}lder's inequality}, we have
\begin{align*}
&\e^{2s\mu_4}\int_0^{\f{T}{2M}} \int_{\Om_0} 
\left\{\f 1 s |\pa_t u|^2
+s|\na u|^2 +s^3 u^2\right\}\dx\dt 
\\
\le&C\e^{Cs}\|F\|_{L^2(Q)}
+C\e^{2s\mu_3}\|u\|_{H^{1,0}(Q)}^2
+C\e^{2s\f{\mu_3+\mu_4}2}\int_Q |\pa_t u|^2 \dx\dt
 \nonumber\\
&+\e^{Cs} \int_{\Om} (|\na v(x,0)|^2+v^2(x,0))\dx
+ \e^{Cs} \int_{\Ga\times(0,T)}(|\pa_t v|^2+|\na v|^2+v^2)\d S\dt.
\end{align*}
for $s\ge s_0$.
Then dividing both sides by $\e^{2s\mu_4}$, since 
$$
s\e^{-2s\f{\mu_4-\mu_3}2} \le C\e^{-\f{s(\mu_4-\mu_3)}2},\quad 
s\e^{-2s(\mu_4-\mu_3)} \le C\e^{-s(\mu_4-\mu_3)} \le C\e^{-\f{s(\mu_4-\mu_3)}2},
$$
by replacing $C$ by $C\e^{Cs_0}$, we have
\begin{equation}
\label{esti-u}
\|u \|_{H^{1,1}(\Om_0\times(0,\f{T}{2M}))}^2
\le C\e^{-\f{\mu_4-\mu_3}2s}\|u\|_{H^{1,1}(Q)}^2
+C\e^{Cs} \cD^2
\end{equation}
for all {$s\ge 0$} and $u\in H^{2,1}(Q)$, where the constant $C>0$ depends on $T,\Om_0$ and the coefficients of the equation \eqref{equ-gov}. 

Firstly, if $\cD=0$, letting $s\to\infty$, we conclude that $u=0$ in $\Om_0\times(0,\f{T}{2M})$,
so that the conclusion of Theorem \ref{thm-LCP<0.5} holds true.
Next let $\cD\ne0$. First let $\cD\ge\|u\|_{H^{1,1}(Q)}$.
Then \eqref{esti-u} implies 
$$
\|u \|_{H^{1,1}(\Om_0\times(0,\f{T}{2M}))}
\le C\e^{Cs}\cD,\quad s\ge 0,
$$
which already proves the theorem.
Second let $\cD\le \|u\|_{H^{1,1}(Q)}$, 
we choose $s>0$ minimizing the right-hand side of \eqref{esti-u},
that is
$$
\e^{-\f{\mu_4-\mu_3}2s}\|u \|_{H^{1,1}(Q)}^2 
=\e^{Cs}\cD^2.
$$
By $\cD\ne0$, we can choose
$$
s=\f4{2C+\mu_4-\mu_3}\log \f{\|u \|_{H^{1,1}(Q)}}{\cD}>0.
$$
Then \eqref{esti-u} gives
$$
\|u \|_{H^{1,1}(\Om_0\times(0,\f{T}{2M}))}
\le 2C\|u \|_{H^{1,1}(Q)}^{1-\te} \cD^\te,
$$
where $\te:=\f{\mu_4-\mu_3}{2C+\mu_4-\mu_3}$, and the constant $C$ depends on $\Om_0,T$ and the coefficients of the equation \eqref{equ-gov}. 
We complete the proof of the theorem by setting $\ve=\f{T}{2M}$.

\end{proof}

\section{Carleman estimate for a sup-diffusion and its applications}
\label{sec-order<=0.75}

In this section, we pay attention to the Carleman estimate for the equation \eqref{equ-gov2} in the case of 
$\al=\f{m}{k}$, say, 
\begin{equation}
\label{sy:<=0.75Caputo}
\pa_t u + q(x)\pa_t^{\f{m}{k}} u - \Delta u + B\cdot \na u + {c}u = F \quad \mbox{in }Q,
\end{equation}
and its applications to the lateral Cauchy problem and the inverse source problem. Without loss of generality, we set $a_{ij}=\de_{ij}$, $1\le i,j\le n$ here. In the following 
two subsections, we will give the proofs of Theorems \ref{thm-CE<=0.75}, \ref{thm-LCP=0.75} and 
\ref{thm-ISP=0.75} respectively.

\subsection{Carleman estimate for a sup-diffusion}
\label{sec-CE<=0.75}

In this subsection, we will give a proof of Theorem \ref{thm-CE<=0.75}. Similar to the case of $\al<\f12$, we construct the Carleman estimate for the following parabolic type equation
\begin{equation}
\label{CE:eq1}
\pa_t u - \Delta u + B\cdot\na u + {c} u = F - q(x)\pa_t^{\al} u,
\end{equation}
whereas here we further multiply on both sides of the above equation by several Riemann-Liouville fractional derivatives in order to dealing the new source term $\pa_t^{\al}u$. We have the following details of the proof.

\begin{proof}[\bf Proof of Theorem \ref{thm-CE<=0.75}]

For some technical reasons, we divide the proof into two cases:
\vspace{0.2cm}

(\romannumeral1) $\al = \f{m}{2k+1}$, $m,k\in \Z$, $k\ge 1$, $m=1,2,...,2k$,

(\romannumeral2) $\al = \f{m}{2k}$, $m,k\in \Z$, $k\ge 1$, $m=1,2,...,2k-1$.
\vspace{0.2cm}

In the first part, we consider the case (\romannumeral1) in which the denominator is odd. 
Because of the zero initial condition, the equation \eqref{CE:eq1} can be rephrased as
\begin{equation}
\label{sy:<=0.75RLD}
\wt L u := \pa_t u - \Delta u + B\cdot \na u + {c}u = F - q(x)D_t^{\f{m}{2k+1}} u \quad \mbox{in }Q.
\end{equation}
Recalling the definition of Riemann-Liouville fractional integral operator $D_t^{-p}$:
$$
D_t^{-p} u := _0\!D_t^{-p} u = \f1{\Ga(p)}\int_0^t (t-s)^{p-1} u(s) \d s,
$$
and we have the semigroup property for the R.- L. fractional integral operators:
$$
D_t^{-p_1} D_t^{-p_2}u = D_t^{-p_1-p_2} u,\quad \forall p_1,\ p_2>0.
$$
We first apply the fractional differential(or integral) operators $D_t^{\f{j}{2k+1}}$ to the equation \eqref{sy:<=0.75RLD} separately for $j=-k,...,k$ to derive
$$
D_t^{\f{j}{2k+1}} (\pa_t u) - D_t^{\f{j}{2k+1}}(\De u) + D_t^{\f{j}{2k+1}}(B\cdot\na u) + D_t^{\f{j}{2k+1}}({c}u) = D_t^{\f{j}{2k+1}} F - D_t^{\f{j}{2k+1}}(qD_t^{\f{m}{2k+1}} u),\quad j=-k,...,k.
$$
Moreover, the homogeneous initial value implies that the differential operators and the R.-L. integral operator are commutable, which along with the formula $D_t^{\f{j}{2k+1}}(D_t^{\f{m}{2k+1}} u) = D_t^{\f{j}{2k+1}+\f{m}{2k+1}} u = D_t^{\f{j+m}{2k+1}} u$ implies that
\begin{equation}
\label{CE:eq2}
\wt L (u_j) = \pa_t u_j - \Delta u_j + B\cdot\na u_j + {c} u_j = D_t^{\f{j}{2k+1}} F - qD_t^{\f{j+m}{2k+1}} u, \quad j=-k,...,k
\end{equation}
where $u_j := D_t^{\f{j}{2k+1}}u$, for all $j\in \Z$. Next we denote $\wt{u_j} := \chi_0 u_j$. 
Then by noting that
$$
\wt L(\chi_0 u) - \chi_0 \wt Lu = (\pa_t\chi_0) u -2\na\chi_0\cdot\na u - (\De\chi_0) u + (B\cdot\na\chi_0) u,
$$
the equations \eqref{CE:eq2} can be rewritten by:
\begin{equation}
\label{CE:eq3}
\wt L(\wt{u_j}) = \chi_0 D_t^{\f{j}{2k+1}}F - (\chi_0 q) u_{j+m} - 2\na\chi_0\cdot\na u_j  + (B\cdot\na\chi_0 - \Delta\chi_0 + \pa_t\chi_0) u_j
\end{equation}
for all $j=-k,...,k$.
Now we use a Carleman estimate for parabolic type stated in the following lemma.
\begin{lem}
\label{CE:para}
Let $F\in L^2(Q)$. Then there exist constants $\hat{\la} \ge 1$, $\hat{s} \ge 1$ and $C>0$, $C(\la)>0$ such that
\begin{align*}
&\int_Q \bigg\{s^{\tau-1}\la^{\tau}\vp_2^{\tau-1}\Big(|\pa_t u|^2 + \sum_{i,j=1}^n|\pa_i\pa_j u|^2\Big) + s^{\tau +1}\la^{\tau +2}\vp_2^{\tau +1}|\na u|^2 + s^{\tau +3}\la^{\tau +4}\vp_2^{\tau +3}|u|^2\bigg\}e^{2s\vp_2} \dx\dt \\
&\hspace{3cm} \le C\int_Q (s\la\vp_2)^{\tau}|F|^2e^{2s\vp_2}\dx\dt + C(\la)e^{C(\la)s}s^{\tau}\int_{\pa Q} (|\na_{x,t} u|^2 + |u|^2) \d S\dt
\end{align*}
for all $s \ge \hat{s}$, $\la \ge \hat{\la}$, $\tau\in \Z$ and all $u$ smooth enough satisfying the equation:
$$
\wt Lu = F.
$$
\end{lem}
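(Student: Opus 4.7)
The plan is to treat this as the classical parabolic Carleman estimate with an additional tracking parameter $\tau$. For $\tau = 0$ the statement is the standard Fursikov--Imanuvilov Carleman estimate for $\wt L = \pa_t - \De + B\cdot\na + c$, which I would derive by the conjugation-and-splitting scheme used in Section~\ref{sec-CE<0.5}. Setting $w := e^{s\vp_2} u$ and forming $Pw := e^{s\vp_2}\wt L(e^{-s\vp_2} w)$, I would split $P = P_1 + P_2 + R$ into its formally symmetric and antisymmetric parts plus a remainder and use the identity
\begin{equation*}
\|Pw\|_{L^2}^2 \ge 2(P_1 w, P_2 w)_{L^2} + \|P_2 w\|_{L^2}^2
\end{equation*}
to generate positive quadratic forms after integration by parts in the cross term. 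The pseudoconvexity inputs are $|\na d|>0$ on $\ov\Om$, which makes the principal spatial quadratic form positive once $\la$ is taken large, together with $\pa_t^2\psi_2 = -2\be<0$, which yields the correct sign for the time cross-derivatives; absorbing $R$ and all lower-order contributions by choosing $\la$ large first and then $s$ large produces the $\tau = 0$ estimate with the stated boundary term.

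To pass from $\tau = 0$ to general $\tau\in\Z$, I would introduce the rescaled unknown $v := (s\la\vp_2)^{\tau/2} u$. Using $\pa_t\vp_2 = -2\be\la(t-t_0)\vp_2$ and $\pa_i\vp_2 = \la(\pa_i d)\vp_2$, a direct computation gives
\begin{equation*}
\wt L v = (s\la\vp_2)^{\tau/2}\,\wt L u + R_\tau[u],
\end{equation*}
where $R_\tau[u]$ is a first-order expression in $u$ whose coefficients are controlled by $C(s\la\vp_2)^{\tau/2}\la^2$ on $u$ and $C(s\la\vp_2)^{\tau/2}\la$ on $\na u$. Applying the $\tau = 0$ estimate to $v$ and rewriting each weighted norm of $v$ as a weighted norm of $u$ via $|v|^2 = (s\la\vp_2)^\tau |u|^2$ (and analogously for $|\na v|^2$, $|\pa_t v|^2$, $|\pa_i\pa_j v|^2$ modulo terms absorbed into $R_\tau$), every left-hand-side weight acquires an extra factor $(s\la\vp_2)^\tau$, producing exactly the weights $s^{\tau-1}\la^\tau\vp_2^{\tau-1}$, $s^{\tau+1}\la^{\tau+2}\vp_2^{\tau+1}$, and $s^{\tau+3}\la^{\tau+4}\vp_2^{\tau+3}$ in the lemma. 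On the right-hand side, $|(s\la\vp_2)^{\tau/2}\wt L u|^2$ gives exactly $(s\la\vp_2)^\tau|F|^2$, while $|R_\tau[u]|^2$ carries weights with strictly fewer powers of $s\vp_2$ than the corresponding left-hand-side terms and can therefore be absorbed on the left for $s$ large.

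The main obstacle is the careful bookkeeping of powers of $s,\la,\vp_2$ in the commutator $R_\tau[u]$, especially for negative $\tau$ where $(s\la\vp_2)^{\tau/2}$ is small and one must verify that absorption into the left-hand side remains uniform; since $\tau$ is fixed in the statement, the threshold $\hat{s}$ is permitted to depend on it. The boundary contribution arises from the integration by parts inherent in the $\tau = 0$ estimate applied to $v$; converting $|\na_{x,t} v|^2 + |v|^2$ back into expressions in $u$ produces the prefactor $s^\tau$, while the exponential $e^{C(\la)s}$ appears because the factor $e^{2s\vp_2}$ is absorbed into the generic constant along $\pa Q$, matching the stated boundary term $C(\la)e^{C(\la)s}s^\tau\int_{\pa Q}(|\na_{x,t} u|^2 + |u|^2)\d S\dt$.
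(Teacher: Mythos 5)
Your proposal is correct, but it takes a genuinely different route from the paper. The paper does not rescale: it simply states that the lemma is proved by repeating the integration-by-parts computation of Theorem 3.2 in \cite{Y09} with the additional weight $\vp_2^{\tau}$ carried through every term, i.e.\ the whole conjugation-and-splitting argument is redone with the modified weight, which is why the authors remark that one must ``calculate more carefully'' and that $\hat{s}$ depends on $\tau$, $\be$, $T$ and the coefficients. You instead prove only the $\tau=0$ case (the classical estimate, which is exactly \cite{Y09}, Theorem 3.2) and reduce general $\tau\in\Z$ by substituting $v:=(s\la\vp_2)^{\tau/2}u$; your bookkeeping of the commutator $R_\tau[u]$ is right: its contributions carry weights $s^{\tau}\la^{\tau+4}\vp_2^{\tau}|u|^2$ and $s^{\tau}\la^{\tau+2}\vp_2^{\tau}|\na u|^2$, which are smaller than the corresponding left-hand terms by factors $s^{-3}\vp_2^{-3}$ and $s^{-1}\vp_2^{-1}$, so absorption for $s\ge\hat{s}(\tau,\la)$ is uniform once $\tau$ is fixed, and on $\pa Q$ the factor $(s\la\vp_2)^{\tau}\le C(\la)s^{\tau}$ reproduces the stated boundary term. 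The reduction buys a cleaner argument (no re-derivation with the extra weight) and makes the $\tau$-dependence of $\hat{s}$ transparent, at the cost of the commutator estimates. Two small caveats: the $\tau=0$ estimate you invoke must contain the $|\pa_t u|^2$ and $\sum_{i,j}|\pa_i\pa_j u|^2$ terms with weight $(s\vp_2)^{-1}$, which the Section~\ref{sec-CE<0.5} computation as written does not deliver (it discards $\|P_1w\|_{L^2}^2$); you should either keep that square or quote \cite{Y09}, Theorem 3.2 directly, as the paper does. Also, the sign condition $\pa_t^2\psi_2=-2\be<0$ is not actually needed: in this framework the $\pa_t\psi_2$ contributions (the analogue of $J_4$) are merely bounded in absolute value and absorbed as lower-order terms, so invoking it as a pseudoconvexity input is superfluous, though harmless.
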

\noindent The proof of this lemma is similar to that of Theorem 3.2 in \cite{Y09}. However we need to calculate more carefully when we do the integration by parts because we have a new weight $\vp_2^{\tau}$. To specify the dependence, $C$ and $\hat{\la}$ depends on $\|\na d\|$ while $\hat{s}$ also depends on $\tau,\be,T$ and the coefficients of the operator $\wt L$. Furthermore, $C(\la)$ even depends on large parameter $\la$ but is independent of $s$. 
\vspace{0.2cm}

Applying the above lemma to the equations \eqref{CE:eq3}, we have the following Carleman estimates:
\begin{align}
\nonumber
&\la\int_Q \bigg\{(s\la\vp_2)^{\tau_j-1}\Big(|\pa_t \wt{u_j}|^2 + \sum_{i,l=1}^n|\pa_i\pa_l \wt{u_j}|^2\Big) + (s\la\vp_2)^{\tau_j +1}|\na \wt{u_j}|^2 + (s\la\vp_2)^{\tau_j +3}|\wt{u_j}|^2\bigg\}e^{2s\vp_2} \dx\dt \\
\nonumber
&\hspace{0cm} \le C\int_Q (s\la\vp_2)^{\tau_j}(|\chi_0 D_t^{\f{j}{2k+1}}F|^2 + |\chi_0 u_{j+m}|^2)e^{2s\vp_2}\dx\dt + C\int_Q (s\la\vp_2)^{\tau_j}( |\wt L(\chi_0 u_j) - \chi_0 \wt L(u_j)|^2 )e^{2s\vp_2}\dx\dt \\
\label{CE:eq4}
&\hspace{1cm} + C(\la)s^{\tau_j}e^{C(\la)s}\int_{\pa Q} (|\na_{x,t}(\chi_0 u_j)|^2 + |\chi_0 u_j|^2)\d S\dt
\end{align}
for all $\la\ge \la_j$, $s\ge s_j$ and $j= -k,..,k$.

Moreover, we observe that a direct calculation implies that
\begin{align*}
&|\pa_t \wt{u_j}|^2 = |(\pa_t\chi_0) u_j + \chi_0(\pa_t u_j)|^2 \ge \f12|\chi_0 (\pa_t u_j)|^2 - |(\pa_t\chi_0)u_j|^2, \\
&|\pa_i\pa_l\wt{u_j}|^2 = |\chi_0\pa_i\pa_l u_j + (\pa_i\chi_0)(\pa_l u_j) + (\pa_l\chi_0)(\pa_i u_j) + (\pa_i\pa_l\chi_0)u_j|^2 \\
&\hspace{1.2cm} \ge \f12|\chi_0\pa_i\pa_l u_j|^2 - 3|(\pa_i\chi_0)(\pa_l u_j)|^2 - 3|(\pa_i\chi_0)(\pa_l u_j)|^2 - 3|(\pa_i\pa_l\chi_0)u_j|^2, \\
&|\na \wt{u_j}|^2 = |\chi_0 \na u_j + u_j \na\chi_0|^2\ge \f12|\chi_0\na u_j|^2 - |\na\chi_0|^2|u_j|^2.
\end{align*}
Thus, Carleman inequalities \eqref{CE:eq4} lead to
\begin{align}
\nonumber
&\la\int_Q \chi_0^2\bigg\{ (s\la\vp_2)^{\tau_0 -1}\Big(|\pa_t u|^2 + \sum_{i,l=1}^n|\pa_i\pa_l u|^2\Big) + (s\la\vp_2)^{\tau_0 +1}|\na u|^2 + (s\la\vp_2)^{\tau_0 + 3}|u|^2 \bigg\}e^{2s\vp_2}\dx\dt \\
\label{CE:eq5}
&\hspace{0cm} \le C\int_Q\chi_0^2(s\la\vp_2)^{\tau_0}(|F|^2 + |u_{m}|^2)e^{2s\vp_2}\dx\dt + C(\la)s^{\tau_0}e^{C(\la)s}\int_{\pa Q} (|\na_{x,t}(\chi_0 u)|^2 + |\chi_0 u|^2)\d S\dt \\
\nonumber
&\hspace{0cm} + C\int_Q (s\la\vp_2)^{\tau_0}\Big( |\wt L(\chi_0 u) - \chi_0 \wt Lu|^2 + (|\pa_t\chi_0|^2 + s\la^2\vp_2|\na\chi_0|^2 + \sum_{i,l=1}^n|\pa_i\pa_l\chi_0|^2)|u|^2 + |\na\chi_0|^2|\na u|^2 \Big)e^{2s\vp_2} \dx\dt
\end{align}
for all $\la\ge \la_0$, $s\ge s_0$ and
\begin{align}
\nonumber
&\la \int_Q \chi_0^2\bigg\{ (s\la\vp_2)^{\tau_j -1}|\pa_t u_j|^2 + (s\la\vp_2)^{\tau_j +1}|\na u_j|^2 + (s\la\vp_2)^{\tau_j +3}|u_j|^2 \bigg\}e^{2s\vp_2}\dx\dt \\
\nonumber
&\hspace{0cm} \le C\int_Q \chi_0^2(s\la\vp_2)^{\tau_j}(|D_t^{\f{j}{2k+1}} F|^2 + |u_{j+m}|^2)e^{2s\vp_2}\dx\dt + C(\la)s^{\tau_j}e^{C(\la)s}\int_{\pa Q} (|\na_{x,t}(\chi_0 u_j)|^2 + |\chi_0 u_j|^2)\d S\dt \\
\label{CE:eq6}
&\hspace{0cm} + C\int_Q (s\la\vp_2)^{\tau_j}( |\wt L(\chi_0 u_j) - \chi_0 \wt L(u_j)|^2 + |\pa_t\chi_0|^2|u_j|^2 + s\la^2\vp_2|\na\chi_0|^2|u_j|^2 )e^{2s\vp_2} \dx\dt
\end{align}
for all $\la\ge \la_j$, $s\ge s_j$ and all $j=-k,...,-1,1,...,k$. 
Combining the Carleman inequalities \eqref{CE:eq5}--\eqref{CE:eq6}, i.e., taking the summation, we obtain 
\begin{align*}
&\la \int_Q \chi_0^2\bigg\{(s\la\vp_2)^{\tau_0 -1}\sum_{i,l=1}^n|\pa_i\pa_l u|^2 +(s\la\vp_2)^{\tau_0 +1}|\na u|^2+ \sum_{j=-k}^k\left( (s\la\vp_2)^{\tau_j -1}|\pa_t u_j|^2 + (s\la\vp_2)^{\tau_j +3}|u_j|^2\right) \bigg\}e^{2s\vp_2}\dx\dt \\
&\le C\int_Q \chi_0^2\sum_{j=-k}^k (s\la\vp_2)^{\tau_j}|D_t^{\f{j}{2k+1}} F|^2e^{2s\vp_2}\dx\dt + C\int_Q \chi_0^2\sum_{j=-k}^k (s\la\vp_2)^{\tau_j}|u_{j+m}|^2e^{2s\vp_2}\dx\dt + Low1 + Bdy1
\end{align*}
for all $\la\ge \wt \la:=\max\{\la_j: -k\le j\le k\}$ and $s\ge \wt s:=\max\{s_j: -k\le j\le k\}$. Here $Low1$ and $Bdy1$ are the lower order terms and boundary terms determined as 
\begin{align*}
&Low1 
= C\int_Q \sum_{j=-k}^k (s\la\vp_2)^{\tau_j}\bigg(|\pa_t\chi_0|^2 + |\na\chi_0|^2 + \sum_{i,l=1}^n |\pa_i\pa_l \chi_0|^2\bigg) \left(|\na u_j|^2 + s\la^2\vp_2|u_j|^2 \right)\e^{2s\vp_2} \dx\dt, \\
&Bdy1= C(\la)\e^{C(\la)s}\int_{\pa Q} \sum_{j=-k}^k s^{\tau_j}\left(|\pa_t\chi_0|^2 + |\na\chi_0|^2 + |\chi_0|^2\right)\left(|\na_{x,t} u_j|^2 + |u_j|^2 \right) \d S\dt.
\end{align*}
Now we fix $\tau_j=-\f{4}{2k+1}(j+k)\le 0$, $j=-k,...,k$. Along with the relation 
$$
u_{j+2k+1} = D_t^{\f{j+2k+1}{2k+1}} u = \pa_t (D_t^{\f{j}{2k+1}}u) = \pa_t u_j
$$
which follows after the zero initial condition, a direct calculation yields that
\begin{align*}
&\sum_{j=-k}^k (s\la\vp_2)^{\tau_j -1}|u_{j+2k+1}|^2 = \sum_{j=-k}^k (s\la\vp_2)^{-\f{4j+6k+1}{2k+1}}|u_{j+2k+1}|^2 = \sum_{j=k+1}^{3k+1} (s\la\vp_2)^{-\f{4j-2k-3}{2k+1}}|u_{j}|^2, \\
&\sum_{j=-k}^k (s\la\vp_2)^{\tau_j +3}|u_{j}|^2 = \sum_{j=-k}^k (s\la\vp_2)^{-\f{4j-2k-3}{2k+1}}|u_{j}|^2, \\
&\sum_{j=-k}^k (s\la\vp_2)^{\tau_j}|u_{j+m}|^2 = \sum_{j=-k}^k (s\la\vp_2)^{-\f{4j+4k}{2k+1}}|u_{j+m}|^2 = \sum_{j=-k+m}^{k+m} (s\la\vp_2)^{-\f{4j+4k-4m}{2k+1}}|u_{j}|^2,
\end{align*} 
which imply that
\begin{align}
\nonumber
&\la \int_Q \chi_0^2\bigg\{\sum_{i,l=1}^n(s\la\vp_2)^{-\f{6k+1}{2k+1}}|\pa_i\pa_l u|^2 +(s\la\vp_2)^{-\f{2k-1}{2k+1}}|\na u|^2+ \sum_{j=-k}^{3k+1}(s\la\vp_2)^{-\f{4j-2k-3}{2k+1}}|u_{j}|^2\bigg\}e^{2s\vp_2}\dx\dt \\
\nonumber
&\le C\int_Q \chi_0^2\sum_{j=-k}^k (s\la\vp_2)^{-\f{4j+4k}{2k+1}}|D_t^{\f{j}{2k+1}} F|^2e^{2s\vp_2}\dx\dt + C\int_Q \chi_0^2\sum_{j=-k+m}^{k+m} (s\la\vp_2)^{-\f{4j+4k-4m}{2k+1}}|u_j|^2e^{2s\vp_2}\dx\dt \\
\label{CE:eq7}
&\hspace{1cm}+ Low1 + Bdy1
\end{align}
for all $\la\ge \wt\la$ and all $s\ge \wt s$. Since $\al = \f{m}{2k+1}\in(0,\f{3}{4}]$ implies 
$$
-\f{4j-2k-3}{2k+1} \ge -\f{4j+4k-4m}{2k+1}\quad \text{for } j=-k+m,...,k+m, 
$$
we then fix $\la$ large so that the second terms on the RHS of \eqref{CE:eq7} can be absorbed into the LHS of it ($\la\ge \hat{\la}$). Moreover, we have
\begin{align*}
&Low1 \le C\int_Q \sum_{j=-k}^k \bigg(|\pa_t\chi_0|^2 + |\na\chi_0|^2 + \sum_{i,l=1}^n |\pa_i\pa_l \chi_0|^2\bigg) \left(|\na u_j|^2 + s\vp_2|u_j|^2 \right)\e^{2s\vp_2} \dx\dt \le Low, \\
&Bdy1 \le C\e^{Cs}\int_{\pa Q} \sum_{j=-k}^k \left(|\pa_t\chi_0|^2 + |\na\chi_0|^2 + |\chi_0|^2\right)\left(|\na_{x,t} u_j|^2 + |u_j|^2 \right) \d S\dt = Bdy
\end{align*}
since $s,\la,\vp_2\ge 1$ and $\tau_j\le 0$. Therefore, \eqref{CE:eq7} yields
\begin{align}
\nonumber
&\int_Q \chi_0^2\bigg\{\sum_{i,l=1}^n(s\vp_2)^{-\f{6k+1}{2k+1}}|\pa_i\pa_l u|^2 +(s\vp_2)^{-\f{2k-1}{2k+1}}|\na u|^2+ \sum_{j=-k}^{3k+1}(s\vp_2)^{-\f{4j-2k-3}{2k+1}}|D_t^{\f{j}{2k+1}}u|^2\bigg\}e^{2s\vp_2}\dx\dt \\
\label{CE:eq8}
&\le C\int_Q \chi_0^2\sum_{j=-k}^k (s\vp_2)^{-\f{4j+4k}{2k+1}}|D_t^{\f{j}{2k+1}} F|^2e^{2s\vp_2}\dx\dt + Low + Bdy
\end{align}
for $\la$ large fixed and all $s\ge \wt s$.

For the case (\romannumeral2), we repeat the strategy of above to obtain the equations similar to \eqref{CE:eq3}:
\begin{equation}
\label{CE:eq9}
\wt L(\wt{u_j}) = \chi_0 D_t^{\f{j}{2k}}F - (\chi_0 q) u_{j+m} - 2\na\chi_0\cdot\na u_j  + (B\cdot\na\chi_0 - \Delta\chi_0 + \pa_t\chi_0) u_j
\end{equation}
for all $j=-k+1,...,k$ where we denote $u_j:= D_t^{\f{j}{2k}} u$ and $\wt{u_j} = \chi_0 u_j$ for all $j\in \Z$ instead. Again we apply Carleman estimate for parabolic equations (Lemma \ref{CE:para}) to \eqref{CE:eq8} and take the summation over $j$ from $-k+1$ to $k$ which reads
\begin{align}
\nonumber
&\la \int_Q \chi_0^2\bigg\{(s\la\vp_2)^{\tau_0 -1}\sum_{i,l=1}^n|\pa_i\pa_l u|^2 +(s\la\vp_2)^{\tau_0 +1}|\na u|^2+ \sum_{j=-k+1}^k\left( (s\la\vp_2)^{\tau_j -1}|u_{j+2k}|^2 + (s\la\vp_2)^{\tau_j +3}|u_j|^2\right) \bigg\}e^{2s\vp_2}\dx\dt \\
\label{CE:eq10}
&\le C\int_Q \chi_0^2\sum_{j=-k+1}^k (s\la\vp_2)^{\tau_j}|D_t^{\f{j}{2k}} F|^2e^{2s\vp_2}\dx\dt + C\int_Q \chi_0^2\sum_{j=-k+1}^k (s\la\vp_2)^{\tau_j}|u_{j+m}|^2e^{2s\vp_2}\dx\dt + Low2 + Bdy2
\end{align}
for all $\la\ge \wt \la$ and $s\ge \wt s$. Here $Low2$ and $Bdy2$ are the lower order terms and boundary terms determined as 
\begin{align*}
&Low2 = C\int_Q \sum_{j=-k+1}^k (s\la\vp_2)^{\tau_j}\bigg(|\pa_t\chi_0|^2 + |\na\chi_0|^2 + \sum_{i,l=1}^n |\pa_i\pa_l \chi_0|^2\bigg) \left(|\na u_j|^2 + s\la^2\vp_2|u_j|^2 \right)\e^{2s\vp_2} \dx\dt, \\
&Bdy2= C(\la)\e^{C(\la)s}\int_{\pa Q} \sum_{j=-k+1}^k s^{\tau_j}\left(|\pa_t\chi_0|^2 + |\na\chi_0|^2 + |\chi_0|^2\right)\left(|\na_{x,t} u_j|^2 + |u_j|^2 \right) \d S\dt.
\end{align*}
For the case (\romannumeral2), we choose $\tau_j=-\f{2}{k}(j+k-1)\le 0$, $j=-k+1,...,k$. By direct calculation , we obtain
\begin{align*}
&\sum_{j=-k+1}^k (s\la\vp_2)^{\tau_j -1}|u_{j+2k}|^2 = \sum_{j=-k+1}^k (s\la\vp_2)^{-\f{2j+3k-2}{k}}|u_{j+2k}|^2 = \sum_{j=k+1}^{3k} (s\la\vp_2)^{-\f{2j-k-2}{k}}|u_{j}|^2, \\
&\sum_{j=-k+1}^k (s\la\vp_2)^{\tau_j +3}|u_{j}|^2 = \sum_{j=-k+1}^k (s\la\vp_2)^{-\f{2j-k-2}{k}}|u_{j}|^2, \\
&\sum_{j=-k+1}^k (s\la\vp_2)^{\tau_j}|u_{j+m}|^2 = \sum_{j=-k+1}^k (s\la\vp_2)^{-\f{2j+2k-2}{k}}|u_{j+m}|^2 = \sum_{j=-k+m+1}^{k+m} (s\la\vp_2)^{-\f{2j+2k-2m-2}{k}}|u_{j}|^2.
\end{align*} 
Since $\al=\f{m}{2k}\in (0,\f 3 4]$ implies that
$$
-\f{2j-k-2}{k} \ge -\f{2j+2k-2m-2}{k} \quad for\ j=-k+m+1,...,k+m,
$$
we fix $\la$ so large that the second terms on the RHS of \eqref{CE:eq10} can be absorbed. Therefore, we have the following Carleman inequality
\begin{align}
\nonumber
&\int_Q \chi_0^2\bigg\{\sum_{i,l=1}^n(s\vp_2)^{-\f{3k-2}{k}}|\pa_i\pa_l u|^2 +(s\vp_2)^{-\f{k-2}{k}}|\na u|^2+ \sum_{j=-k}^{3k}(s\vp_2)^{-\f{2j-k-2}{k}}|D_t^{\f{j}{2k}}u|^2\bigg\}e^{2s\vp_2}\dx\dt \\
\label{CE:eq11}
&\le C\int_Q \chi_0^2\sum_{j=-k+1}^k (s\vp_2)^{-\f{2j+2k-2}{k}}|D_t^{\f{j}{2k}} F|^2e^{2s\vp_2}\dx\dt + Low + Bdy
\end{align}
for $\la$ large fixed and all $s\ge \wt s$.

Theorem \ref{thm-CE<=0.75} now follows after the two Carleman inequlaities \eqref{CE:eq8} and \eqref{CE:eq11}.
\end{proof}
\begin{rem}
Under the assumption $h(0) = 0$, we note that 
\begin{equation}
\label{esti-alpha<beta}
\|D_t^{\al}h\|_{L^2(0,T)} \le C\|D_t^\be h\|_{L^2(0,T)}, \quad -1<\al<\be<2,
\end{equation}
which actually can be derived by using the definition of $D_t^\al$ and Young's inequality. In view of this estimate, we can also rewrite $Bdy$ as
$$
Bdy = C_1e^{Cs}\int_{\pa Q} \left(|\pa_t\chi_0|^2 + |\na\chi_0|^2 + |\chi_0|^2\right)(|\na_{x,t} D_t^{\f12} u|^2 + |D_t^{\f12} u|^2)\d \Sg.
$$
However, constant $C_1$ now depends on $k$, which is the main difficulty in dealing with the equation \eqref{equ-gov2} with the time-fractional derivative of irrational order by using the density of rational numbers in $\R$. The Carleman estimate for the equation \eqref{equ-gov2} with the general order time-fractional derivatives remains open.
\end{rem}


\subsection{Application to a lateral Cauchy problem for the sup-diffusion}
\label{sec-LCP=0.75}
In this subsection, we employ the Carleman estimate in Theorem \ref{thm-CE<=0.75} to investigate the conditional stability for the lateral Cauchy problem. For this, we recall the partial differential equation:
\begin{equation}
\label{sy:0.75n}
\pa_t u + {q(x) \pa_t^{\f34} }u - \Delta u + B\cdot\na u + {c}u = F \quad \mbox{in }Q 
\end{equation}
with the zero initial condition:
\begin{equation}
\label{con:ini}
u(x,0) = 0, \quad x\in\Om.
\end{equation}

We mainly follow the steps in \cite{Y09} Theorem 5.1. Instead of a classical Carleman estimate for parabolic equation, we should apply our Carleman estimate established in Section \ref{sec-CE<=0.75}. 

\begin{proof}[\bf Proof of Theorem \ref{thm-LCP=0.75}]
By the choice of $\Om_0$, we have $\ov{\Om_0}\subset\Om_1$ where $\Om_1$ is defined in Section \ref{sec-intro}. Then we can find a sufficiently large $N > 1$ such that
\begin{equation}
\label{con:N}
\left\{x\in \Om_1: d(x) > \f3N\|d\|_{C(\ov{\Om_1})} \right\} \cap\ov\Om \supset\Om_0.
\end{equation}
Moreover we choose $\be > 0$ such that
\begin{equation}
\label{con:beta}
\be\ep^2\le \|d\|_{C(\ov{\Om_1})}\le 2\be\ep^2.
\end{equation}
For any $t_0\in [\sqrt2\ep, T - \sqrt2\ep]$, we set $\mu_k := \exp\{\f{k}N\|d\|_{C(\ov{\Om_1})} - \f{\be\ep^2}N + c_0\}$, $E_k := \{(x,t)\in Q_1: \vp_2(x,t) > \mu_k \}$ and $D_k := E_k\cap Q$, $k = 1,2,3$. Recall that $c_0$ is the same constant as that in \eqref{con:weight}. 
Then we can easily verify the following two facts:
\vspace{0.2cm}

\noindent (\romannumeral1)  $\Om_0\times\big(t_0 - \f{\ep}{\sqrt{N}}, t_0 + \f{\ep}{\sqrt{N}}\big) \subset D_3 \subset D_2 \subset D_1 \subset \ov\Om\times (t_0 - \sqrt{2}\ep, t_0 + \sqrt{2}\ep)$, 
\vspace{0.2cm}

\noindent (\romannumeral2)  $\pa D_1 \subset \Sigma_1\cup\Sigma_2, \quad \Sigma_1\subset \Ga\times (0,T), \ \Sigma_2 = \{(x,t)\in Q: \vp_2(x,t) = \mu_1\}$.
\vspace{0.2cm}

Next, we specify the cut-off function $\chi_0$ in Theorem \ref{thm-CE<=0.75} with $D_0 = E_2$ and $D = E_1$. In detail, $\chi_0\in C^\infty(\R^{n+1})$ such that $0\le\chi_0\le 1$ and
$$
\chi_0(x,t) =
\begin{cases}
\ 1, \quad \vp_2(x,t) > \mu_2, \\
\ 0, \quad \vp_2(x,t) \le \mu_1.
\end{cases}
$$
Thus from Theorem \ref{thm-CE<=0.75} (denominator $k=4$ in this proof), it follows that
\begin{equation}
\label{LCP:eq1}
\begin{aligned}
&\int_{D_1} \chi_0^2\Big(\sum_{j=-1}^6 s^{2-j}|D_t^{\f{j}4} u|^2 + |\na u|^2 + s^{-2}\sum_{i,j=1}^n|\pa_i\pa_j u|^2 \Big)e^{2s\vp_2} \dx\dt \\
&\hspace{3cm} \le C\int_{D_1}\chi_0^2 \Big(\sum_{j=-1}^2 |D_t^{\f{j}4} F|^2 \Big)e^{2s\vp_2}\dx\dt + Low + Bdy
\end{aligned}
\end{equation}
for all $s\ge s_0$ where $Low$ and $Bdy$ is the same notations as \eqref{Low}-\eqref{Bdy}. In the above inequality, we have used the fact that $\vp_2$ is upper and lower bounded by a constant which is independent of $s$. Since the second term $Low$ on the RHS includes some derivates of $\chi_0$, it vanishes in $\ov{E_2}$ and outside of $E_1$, which implies that $\vp_2(x,t) \le \mu_2$ in $E_1\setminus \ov{E_2}$ ($\supset D_1\setminus\ov{D_2}$) and thus reads
\begin{align*}
&Low = Cs\int_Q \left(|\pa_t\chi_0|^2 + |\na\chi_0|^2 + \sum_{i,j=1}^n |\pa_i\pa_j \chi_0|^2\right)\sum_{j=-1}^{2} \left(|\na(D_t^{\f{j}4} u)|^2 + |D_t^{\f{j}4} u|^2 \right)\e^{2s\vp_2} \dx\dt \\
&\hspace{0.68cm} \le Cs\e^{2s\mu_2}\int_{D_1\setminus\ov{D_2}} \sum_{j=-1}^2 \left(|\na(D_t^{\f{j}4} u)|^2 + |D_t^{\f{j}4} u|^2 \right)\dx\dt \le Cs\e^{2s\mu_2}\|D_t^{\f12}u\|_{H^{1,0}(Q)}^2.
\end{align*}
The last inequality above holds after the estimate \eqref{esti-alpha<beta}. Furthermore, fact (\romannumeral2) and the choice of $\chi_0$ yield that $\na\chi_0,\pa_t\chi_0$ and $\chi_0$ vanish on both $\Om\times\{0,T\}$ and $(\pa\Om\setminus\Ga)\times(0,T)$ which indicate that
\begin{align*}
&Bdy = C\e^{Cs}\int_{\pa Q} \left(|\pa_t\chi_0|^2 + |\na\chi_0|^2 + |\chi_0|^2\right)\sum_{j=-1}^{2} \left(|\na_{x,t} (D_t^{\f{j}{4}} u)|^2 + |D_t^{\f{j}{4}} u|^2 \right) \d S\dt \\
&\hspace{0.68cm} \le C\e^{Cs}\int_{\Ga\times(0,T)}\sum_{j=-1}^{2} \left(|\na_{x,t} (D_t^{\f{j}{4}} u)|^2 + |D_t^{\f{j}{4}} u|^2 \right) \d S\dt \\
&\hspace{0.68cm} \le C\e^{Cs}\left(\|\na_{x,t}(D_t^{\f12}u)\|_{L^2(\Ga\times(0,T))}^2 + \|D_t^{\f12}u\|_{L^2(\Ga\times(0,T))}^2\right).
\end{align*}
The last inequality again comes from \eqref{esti-alpha<beta}. Thus, we obtain
\begin{align*}
&RHS\ of\ \eqref{LCP:eq1} \le C\e^{Cs}\sum_{j=-1}^2 \int_{D_1}|D_t^{\f{j}4}F|^2\dx\dt + Cs\e^{2s\mu_2}\|D_t^{\f12}u\|_{H^{1,0}(Q)}^2 \\
&\hspace{2.3cm} + C\e^{Cs}\left(\|\na_{x,t}(D_t^{\f12}u)\|_{L^2(\Ga\times(0,T))}^2 + \|D_t^{\f12}u\|_{L^2(\Ga\times(0,T))}^2\right) \\
&\hspace{2cm} \le Cs\e^{2s\mu_2}M^2 + C\e^{Cs}\cD^2
\end{align*}
where $M$ and $\cD$ are defined in Theorem \ref{thm-LCP=0.75}.
On the other hand, the inclusion $\Om_0\times\big(t_0 - \f{\ep}{\sqrt{N}}, t_0 + \f{\ep}{\sqrt{N}}\big) \subset D_3 \subset D_2$ and $\chi_0 = 1$ in $D_2$ imply that
\begin{align*}
&LHS\ of\ \eqref{LCP:eq1} = \int_{D_2} \Big(\sum_{j=-1}^6 s^{2-j}|D_t^{\f{j}4} u|^2 + |\na u|^2 + s^{-2}\sum_{i,j=1}^n|\pa_i\pa_j u|^2 \Big)e^{2s\vp_2} \dx\dt  \\
&\hspace{2cm} \ge \int_{D_3} \Big(\sum_{j=-1}^6 s^{2-j}|D_t^{\f{j}4} u|^2 + |\na u|^2 + s^{-2}\sum_{i,j=1}^n|\pa_i\pa_j u|^2 \Big)e^{2s\vp_2} \dx\dt \\
&\hspace{2cm} \ge e^{2s\mu_3}\int_{t_0 - \f{\ep}{\sqrt{N}}}^{t_0 + \f{\ep}{\sqrt{N}}} \int_{\Om_0} \Big(\sum_{j=-1}^6 s^{2-j}|D_t^{\f{j}4} u|^2 + |\na u|^2 + s^{-2}\sum_{i,j=1}^n|\pa_i\pa_j u|^2 \Big) \dx\dt.
\end{align*}
Therefore \eqref{LCP:eq1} yields 
$$
\|u\|_{H^{2,1}(\Om_0\times (t_0-\f{\ep}{\sqrt{N}}, t_0+\f{\ep}{\sqrt{N}}))}^2 \le Cs^3e^{-2s(\mu_3-\mu_2)} M^2 + Cs^2e^{Cs}\cD^2
$$
for all $s\ge s_0$. Since $\sup_{s>0} s^3e^{-s(\mu_3-\mu_2)} < \infty$ and $s^2 \le e^{Cs}$ for $s$ large enough (e.g. $s\ge s_1$), we have
$$
\|u\|_{H^{2,1}(\Om_0\times (t_0-\f{\ep}{\sqrt{N}}, t_0+\f{\ep}{\sqrt{N}}))}^2 \le Ce^{-C_0s}M^2 + Ce^{Cs}\cD^2
$$
for all $s\ge s_2:=\max\{s_0,s_1\}$, $C_0 := \mu_3 - \mu_2 > 0$. After the change of the variable $s-s_2\to s$, we obtain
\begin{equation}
\label{LCP:eq2}
\|u\|_{H^{2,1}(\Om_0\times (t_0-\f{\ep}{\sqrt{N}}, t_0+\f{\ep}{\sqrt{N}}))}^2 \le Ce^{-C_0s}M^2 + Ce^{Cs}\cD^2
\end{equation}
for all $s\ge 0$. The generic constant $C$ depends on $s_2$, $\ep$ and the choice of $\Om_0$, etc., but is still independent of $s$.

Let $m\in \N$ satisfy
$$
\sqrt2\ep + \f{m\ep}{\sqrt{N}} \le T - \sqrt{2}\ep < \sqrt{2}\ep + \f{(m+1)\ep}{\sqrt{N}} < T.
$$
We here notice that the constant $C$ is also independent of $t_0$ provided that $t_0\in [\sqrt{2}\ep, T - \sqrt{2}\ep]$. Thus, by taking $t_0 = \sqrt{2}\ep + \f{k\ep}{\sqrt{N}}$, $k = 0,1,2,..., m$ in \eqref{LCP:eq2}, summing up over $k$ and replacing $\sqrt{2}\ep$ by $\ep$, we obtain
\begin{equation}
\label{LCP:eq3}
\|u\|_{H^{2,1}(\Om_0\times (\ep, T-\ep))}^2 \le Ce^{-C_0s}M^2 + Ce^{Cs}\cD^2
\end{equation}
for all $s\ge 0$.  

Finally, we show the estimate of H\"{o}lder type.

Firstly, let $\cD = 0$. Then letting $s\rightarrow \infty$ in \eqref{LCP:eq3}, we see that $u = 0$ in $\Om_0\times (\ep, T-\ep)$. So that the conclusion holds true.

Secondly, let $\cD \neq 0$. We divide it into two cases. In the case of $\cD \ge M$, we see that \eqref{LCP:eq3} implies $\|u\|_{H^{2,1}(\Om_0\times (\ep, T-\ep))} \le Ce^{Cs}\cD$ for all $s\ge 0$. This has already proved the theorem. 
Now if $\cD \le M$. We choose $s > 0$ minimizing the right-hand side of \eqref{LCP:eq3}, that is, 
$$
e^{-C_0s}M^2 = e^{Cs}\cD^2,
$$
which yields
$$
s = \f{2}{C + C_0} \log{\f{M}{\cD}}
$$
in view of that $\cD\ne0$. Then \eqref{LCP:eq3} gives
$$
\|u\|_{H^{2,1}(\Om_0\times (\ep, T-\ep))}^2 \le 2CM^{\f{C}{C+C_0}}\cD^{\f{C_0}{C+C_0}}.
$$
The proof of Theorem \ref{thm-LCP=0.75} is completed. 
\end{proof}
\subsection{Application to an inverse source problem for the sup-diffusion}
\label{sec:ISP}
In this subsection, we consider another application of Theorem \ref{thm-CE<=0.75}, say, the stability for the inverse source problem (Problem \ref{prob-ISP=0.75}). We point out here that the uniqueness result for this kind of inverse problem can be proved by a similar argument in \cite{JLLY16}.

Before giving the proof of Theorem \ref{thm-ISP=0.75}, we first recall the notation of the bounded domain $\Om_1$ defined in \eqref{con:Om1}, and function $d\in C^2(\ov{\Om_1})$ can be chosen so that \eqref{con:d} holds. Next for any $\ep>0$, we {recall the} level set of $\ep$ related to function $d$ as follows
$$
\Om_\ep := \{x\in\Om: d(x) > \ep\}.
$$
Note that $\Om_\ep \neq \emptyset$ for small $\ep$ and $\ov{\Om_\ep}\cap \pa\Om \subset \Gamma$.
Now we are ready to give a proof of the stability result. 

\begin{proof}[\bf Proof of Theorem \ref{thm-ISP=0.75}]
Recall that our weight function is chosen as
$$
\vp_2(x,t) = e^{\la\psi_2(x,t)}, \quad \psi_2(x,t) = d(x) - \beta(t-t_0)^2 + c_0.
$$
Here, $c_0$ is a constant such that $\psi_2$ is always nonnegative and $s,\la$ are two large parameters while the parameter $\beta > 0$ will be fixed later. Then on both sides of our governing equation \eqref{sy:ISP}, we integrate over the domain $\Om_{3\ep}$ at time $t = t_0$ and obtain
\begin{equation}
\label{ISP:eq1}
\begin{aligned}
&\int_{\Om_{3\ep}} |R(x,t_0)f(x)|^2 e^{2s\vp_2(x,t_0)} \dx 
\le \int_{\Om_{3\ep}} |\pa_t u(x,t_0)|^2\e^{2s\vp_2(x,t_0)} \dx 
+ {C}\int_{\Om_{3\ep}} \left|D_t^{\f34} u(x,t_0)\right|^2\e^{2s\vp_2(x,t_0)} \dx \\
&\hspace{3cm} + \int_{\Om_{3\ep}} |-\De u(x,t_0) + B(x)\cdot\na u(x,t_0) + {c}(x)u(x,t_0)|^2\e^{2s\vp_2(x,t_0)}\dx.
\end{aligned}
\end{equation}
Easily, we see that the third term on the RHS is bounded by $Ce^{Cs}\|u(\cdot,t_0)\|_{H^2(\Om_{3\ep})}^2$ and the LHS can be estimated with some $C_0>0$:
\begin{equation}
\label{ISP:eq2}
\int_{\Om_{3\ep}} |R(x,t_0)f(x)|^2e^{2s\vp_2(x,t_0)}dx \ge C_0 \int_{\Om_{3\ep}} |f(x)|^2e^{2s\vp_2(x,t_0)}dx
\end{equation}
under the first assumption in \eqref{con:R}.  The key point is how to estimate the first and second terms on the RHS of \eqref{ISP:eq1}. 

By our notation $\Om_{\ep} = \{x\in\Om: d(x) > \ep\}$ for any $\ep>0$, we further set 
$$
Q_\ep := \{(x,t)\in Q: \psi_2(x,t) > \ep + c_0\}, \quad \ep>0.
$$
Then we have the following relations:
\vspace{0.1cm}

(\romannumeral1)  $Q_\ep\subset\Om_{\ep}\times (0,T)$, 
\vspace{0.1cm}

(\romannumeral2)  $Q_\ep\supset\Om_{\ep}\times \{t_0\}$.
\vspace{0.1cm}

\noindent In fact, if $(x,t)\in Q_\ep$, we have $d(x) - \be(t-t_0)^2 > \ep$, i.e. $d(x)>\be(t-t_0)^2 + \ep> \ep$. This means $x\in\Om_{\ep}$. (\romannumeral1) is verified. On the other hand, if $x\in\Om_\ep$ and $t=t_0$ then $\psi_2(x,t) = d(x) - \be(t-t_0)^2 + c_0 = d(x) + c_0 > \ep + c_0$. That is, $(x,t)\in Q_\ep$. (\romannumeral2) is verified.
Furthermore, we choose $\be = \f{\|d\|_{C(\ov{\Om_1})}}{\de^2}$ where $\de:= \min\{t_0, T- t_0\}$ so that 
\vspace{0.1cm}

(\romannumeral3)  $\ov{Q_\ep}\cap (\Om\times\{0,T\}) = \emptyset$
\vspace{0.1cm}

\noindent is valid. Indeed, for $\forall (x,t)\in \Om\times\{0,T\}$, $\psi(x,t) = d(x) - \be(t-t_0)^2 + c_0 \le \|d\|_{C(\ov{\Om_1})} - \be\de^2 + c_0 = c_0$. This leads to $(x,t)\notin \ov{Q_\ep}$. 

Relations (\romannumeral1) -- (\romannumeral3) guarantee that $Q_\ep$ is a sub-domain of $Q$ and $\pa Q_\ep\cap \pa Q\subset \Ga\times (0,T)$. Moreover, we assert that 
\vspace{0.1cm}

(\romannumeral4)   $\Om_{3\ep}\times (t_0-\delta_{\ep},t_0)\subset Q_{2\ep}, \quad \de_\ep := \sqrt{\f{\ep}{\be}} = \sqrt{\f{\ep}{\|d\|}}\de$.
\vspace{0.1cm}

\noindent Actually, for any $(x,t)\in \Om_{3\ep}\times (t_0-\delta_{\ep},t_0)$, we have
$$
\psi(x,t) = d(x) - \beta(t-t_0)^2 + c_0 > 3\ep - \beta\delta_{\ep}^2 + c_0 = 2\ep + c_0.
$$
That is, $(x,t)\in Q_{2\ep}$. 

Now we construct a function {$\eta\in C^2[0,T]$ such that $0\le \eta\le 1$ and
$$
\eta = \left\{
\begin{aligned}
& 1 \quad in\ [t_0-\f12\de_\ep, t_0+\f12\de_\ep], \\
& 0 \quad in\ [0,t_0-\de_\ep]\cup [t_0+ \de_\ep, T]
\end{aligned}
\right. 
$$
}
for any small $\ep<\|d\|_{C(\ov{\Om_1})}$. Then by the use of the condition \eqref{con:initial}, and noting that $\eta(t_0-\de_\ep) = 0$, $\eta(t_0) = 1$, we have
\begin{equation}
\label{ISP:eq3}
\begin{aligned}
&\int_{\Om_{3\ep}} |\pa_t u(x,t_0)|^2e^{2s\vp_2(x,t_0)}\dx = \int_{\Om_{3\ep}} |\eta(t_0)\pa_t u(x,t_0)|^2\e^{2s\vp_2(x,t_0)}\dx \\
&\hspace{4.28cm} = \int_{t_0-\de_\ep}^{t_0} \f{\d}{\dt} \int_{\Om_{3\ep}}|\eta\pa_t u|^2\e^{2s\vp_2}\dx\dt \\
&\hspace{4.28cm} = \int_{t_0-\de_\ep}^{t_0}\int_{\Om_{3\ep}} 2\eta\pa_t u (\eta\pa_t^2 u + \pa_t \eta \pa_t u)\e^{2s\vp_2}\dx\dt \\
&\hspace{4.5cm} -\int_{t_0-\de_\ep}^{t_0}\int_{\Om_{3\ep}} 4(t-t_0)\be s\la\vp_2|\eta\pa_t u|^2\e^{2s\vp_2}\dx\dt \\
&\hspace{4.28cm} \le C \int_{Q_{2\ep}} (s^{-2}|\pa_t^2 u|^2 + s^2 |\pa_t u|^2)\e^{2s\vp_2} \dx\dt
\end{aligned}
\end{equation}
and
\begin{equation}
\label{ISP:eq4}
\begin{aligned}
&\int_{\Om_{3\ep}} |D_t^{\f34} u(x,t_0)|^2e^{2s\vp_2(x,t_0)}\dx = \int_{\Om_{3\ep}} |\eta(t_0)D_t^{\f34} u(x,t_0)|^2e^{2s\vp_2(x,t_0)}\dx \\
&\hspace{4.28cm} = \int_{t_0-\de_\ep}^{t_0} \f{\d}{\dt} \int_{\Om_{3\ep}}|\eta D_t^{\f34} u|^2\e^{2s\vp_2}\dx\dt \\
&\hspace{4.28cm} = \int_{t_0-\de_\ep}^{t_0}\int_{\Om_{3\ep}} 2\eta D_t^{\f34} u (\eta\pa_t(D_t^{\f34} u) + \pa_t \eta D_t^{\f34} u)\e^{2s\vp_2}\dx\dt \\
&\hspace{4.5cm} -\int_{t_0-\de_\ep}^{t_0}\int_{\Om_{3\ep}} 4\be s\la\vp_2(t-t_0)|\eta D_t^{\f34} u|^2\e^{2s\vp_2}\dx\dt \\
&\hspace{4.28cm} \le \f{C}{s} \int_{Q_{2\ep}} (s^{-1}|D_t^{\f74} u|^2 + s^3 |D_t^{\f34} u|^2)\e^{2s\vp_2}\dx\dt.
\end{aligned}
\end{equation}

Now we intend to use the Carleman estimate established in Section \ref{sec-CE<=0.75} to evaluate the RHS of \eqref{ISP:eq3} and \eqref{ISP:eq4}.  For this, we take time derivative on both sides of \eqref{sy:ISP} and then $u_1 := \pa_t u$ reads the following equation
\begin{equation}
\label{sy:ISP1}
\pa_t u_1 + {q(x)\pa_t^{\f34}} u_1 - \De u_1 + B\cdot\na u_1 + {c} u_1 = (\pa_t R)f, \quad \mbox{in } Q.
\end{equation}
Noting the compatible condition $R(\cdot,0)=0$, we apply Theorem \ref{thm-CE<=0.75} to \eqref{sy:ISP1}, and then we have
$$
\begin{aligned}
&\int_Q \chi_0^2(s^{-2}|\pa_t u_1|^2 + s^{-1}|D_t^{\f34} u_1|^2 + s^2|u_1|^2 + s^3|D_t^{-\f14} u_1|^2)e^{2s\vp_2}\dx\dt \\
&\hspace{3.5cm} \le C\int_Q \chi_0^2\bigg(\sum_{j=3}^6\Big|D_t^{\f{j}4} R\Big|^2\bigg)|f|^2e^{2s\vp_2}\dx\dt + Low1 + Bdy1
\end{aligned}
$$
for all $s\ge s_1\ge 1$, where the terms $Low1$ and $Bdy1$ are defined as
\begin{align*}
&Low1 = Cs\int_Q \left(|\pa_t\chi_0|^2 + |\na\chi_0|^2 + \sum_{i,j=1}^n|\pa_i\pa_j \chi_0|^2\right)\bigg(\sum_{j=-1}^2 \big(|\na(D_t^{\f{j}4 } u_1)|^2 + |D_t^{\f{j}4} u_1|^2 \big)\bigg)e^{2s\vp_2} \dx\dt, \\
&{Bdy1 = Ce^{Cs}\int_{\pa Q} \left(|\pa_t\chi_0|^2 + |\na\chi_0|^2 + |\chi_0|^2\right)\sum_{j=-1}^2\big(|\na_{x,t} (D_t^{\f{j}4} u_1)|^2 + |D_t^{\f{j}4} u_1|^2 \big) \d S\dt}.
\end{align*}
We choose the cut-off function $\chi_0\in C^\infty(\R^{n+1})$ such that $0\le \chi_0\le 1$ and
$$
\chi_0 = \left\{
\begin{aligned}
& 1, \qquad \psi_2(x,t) > 2\ep + c_0, \\
& 0, \qquad \psi_2(x,t) \le \ep + c_0.
\end{aligned}
\right.
$$
Therefore $\chi_0 = 1$ in $Q_{2\ep}$ while its derivatives vanish in $Q_{2\ep}$ which enable us to rewrite the above Carleman inequality as follows:
\begin{equation}
\label{ISP:eq5}
\begin{aligned}
&\int_{Q_{2\ep}} (s^{-2}|\pa_t^2 u|^2 + s^{-1}|D_t^{\f74} u|^2 + s^2|\pa_t u|^2 + s^3|D_t^{\f34} u|^2)e^{2s\vp_2}\dx\dt \\
&\hspace{3cm} \le C\int_{Q_{\ep}} \bigg(\sum_{j=3}^6\Big|D_t^{\f{j}4} R\Big|^2\bigg)|f|^2e^{2s\vp_2}\dx\dt + Low2 + Bdy2
\end{aligned}
\end{equation}
for all $s\ge s_1\ge 1$, where 
\begin{align*}
&Low2 = Cse^{2se^{\la(2\ep + c_0)}}\int_{Q_\ep\setminus Q_{2\ep}} \big(|\na(D_t^{\f{3}2} u)|^2 + |D_t^{\f{3}2} u|^2 \big)\dx\dt, \\
&Bdy2 = Ce^{Cs}\int_{\Ga\times(0,T)} \big(|\na_{x,t} (D_t^{\f{3}2} u)|^2 + |D_t^{\f{3}2} u|^2 \big) \d S\dt.
\end{align*}
The above inequality is derived from the choice of $\chi_0$ and the estimate \eqref{esti-alpha<beta} as well as the relations (\romannumeral1)-(\romannumeral3) which indicate that both the derivatives of $\chi_0$ and $\chi_0$ itself vanish on $\pa Q$ except for the lateral boundary $\Ga\times(0,T)$.
Now substituting \eqref{ISP:eq5} into \eqref{ISP:eq3} and \eqref{ISP:eq4} implies that
$$
\begin{aligned}
&\int_{\Om_{3\ep}} |\pa_t u(x,t_0)|^2e^{2s\vp_2(x,t_0)} \dx + \int_{\Om_{3\ep}} \Big|D_t^{\f34} u(x,t_0)\Big|^2e^{2s\vp_2(x,t_0)} \dx \\
&\hspace{3cm} \le C\int_{Q_{\ep}} \bigg(\sum_{j=3}^6\Big|D_t^{\f{j}4} R\Big|^2\bigg)|f|^2e^{2s\vp_2}\dx\dt + Low2 + Bdy2.
\end{aligned}
$$
Combined with \eqref{ISP:eq1} and \eqref{ISP:eq2}, we obtain
\begin{align*}
\int_{\Om_{3\ep}} |f(x)|^2\e^{2s\vp_2(x,t_0)} \dx \le C\e^{Cs}\|u(\cdot,t_0)\|_{H^2(\Om_{3\ep})}^2 + C\int_{Q_{\ep}} |f|^2\e^{2s\vp_2}\dx\dt  + Low2 + Bdy2.
\end{align*}
Moreover, we divide the second term on the RHS into two parts:
\begin{align*}
C\int_{Q_\ep} |f|^2\e^{2s\vp_2}\dx\dt &= C\int_{Q_{3\ep}} |f|^2\e^{2s\vp_2}\dx\dt + C\int_{Q_{\ep}\setminus Q_{3\ep}} |f|^2 \e^{2s\vp_2} \dx\dt \\
&\le C\int_{Q_{3\ep}} |f|^2\e^{2s\vp_2}\dx\dt + C\e^{2s\e^{\la(3\ep + c_0)}}\int_{Q_{\ep}\setminus Q_{3\ep}} |f|^2 \dx\dt,
\end{align*}
which leads to
\begin{align*}
&\int_{\Om_{3\ep}} |f(x)|^2 \e^{2s\vp_2(x,t_0)} \dx \le C\int_{Q_{3\ep}} |f|^2 \e^{2s\vp_2}\dx\dt + C\e^{2s \e^{\la(3\ep + c_0)}}\int_{Q_{\ep}\setminus Q_{3\ep}} |f|^2 \dx\dt \\
&\hspace{3.5cm} + C\e^{2s\e^{\la(2\ep + c_0)}}\int_{Q_\ep} {\big(|\na(D_t^{\f32} u)|^2 + |D_t^{\f32} u|^2 \big)}\dx\dt + C\e^{Cs}\cD^2 \\
&\hspace{3.5cm} \le C\int_{Q_{3\ep}} |f|^2\e^{2s\vp_2}\dx\dt + C\e^{2s\e^{\la(3\ep + c_0)}}M^2 + C\e^{Cs}\cD^2
\end{align*}
for all $s\ge s_1\ge 1$. Here $M$ and $\cD$ denote a priori bound and measurements defined in Theorem \ref{thm-ISP=0.75}.
Since $\vp_2(x,t)$ attains its maximum at $t = t_0$, we can absorb the first term on the RHS into the LHS by taking $s$ large enough (e.g. $s\ge s_2$). That is 
$$
\int_{\Om_{3\ep}} |f(x)|^2\e^{2s\vp_2(x,t_0)}\dx \le Cse^{2se^{\la(3\ep + c_0)}}M^2 + Ce^{Cs}\cD^2
$$
for all $s\ge s_3 = \max\{s_1,s_2\}$. 
On the hand hand, 
$$
\int_{\Om_{3\ep}} |f(x)|^2e^{2s\vp_2(x,t_0)}\dx \ge \int_{\Om_{4\ep}} |f(x)|^2e^{2s\vp_2(x,t_0)}\dx \ge e^{2se^{\la(4\ep + c_0)}}\|f\|_{L^2(\Om_{4\ep})}^2.
$$
Therefore, we obtain
\begin{equation*}
\|f\|_{L^2(\Om_{4\ep})}^2 \le Cse^{-2\ep_0 s}M^2 + Ce^{Cs}\cD^2
\end{equation*}
for all $s\ge s_3$. Here $\ep_0 := e^{\la(4\ep+c_0)} - e^{\la(3\ep+c_0)} = e^{\la(3\ep+c_0)}(e^{\la\ep} -1) > 0$. 
Since $\sup_{s>0} se^{-\ep_0 s} < \infty$, the above inequality gives
\begin{equation}
\label{ISP:eq6}
\|f\|_{L^2(\Om_{4\ep})}^2 \le Ce^{-\ep_0 s}M^2 + Ce^{Cs}\cD^2
\end{equation}
for all $s\ge \hat{s}$ with $\hat{s} \ge s_3$ satisfying $\hat{s}e^{-\ep_0 \hat{s}}\le C$. 
By substituting $s$ by $s+\hat{s}$, inequality \eqref{ISP:eq6} holds for all $s\ge 0$ with a larger generic constant $Ce^{\hat{s}}$ which is again denoted by $C$. 

Finally, we repeat the argument in the proof of Theorem \ref{thm-LCP=0.75} to show the estimate of H\"{o}lder type:
$$
\|f\|_{L^2(\Om_{4\ep})} \le C(M^{1-\te}\cD^\te + \cD).
$$
The proof of Theorem \ref{thm-ISP=0.75} is completed.
\end{proof}

\section{Conclusions and remarks}
\label{sec-rem}

In this paper, we considered the Carleman estimates for the time-fractional advection-diffusion equation \eqref{equ-gov} and the applications. 

First, in the case of sub-diffusion, that is, the largest fractional order is strictly less than half, the Carleman estimate for the equation \eqref{equ-gov} was established by regarding the fractional order terms as perturbation of the first order time-derivative and the use of the Carleman estimate for the parabolic equations. As an application, the conditional stability for a lateral Cauchy problem was obtained, say, the solution of the equation \eqref{equ-gov} is continuously dependent of not only the partial Cauchy data and the source term but also the initial value. Due to our choice of the weight function $\psi_1(x,t)=d(x)-\be t^{2-2\al_1}$, we do not know whether the estimate is valid without the initial value, and this remains open.
On the other hand, the choice of the new weight function $\psi_1$ is not suitable for the study of the inverse problems. As is well known, for dealing with the inverse problems, the Carleman type estimate derived by $d(x)-\be(t-t_0)^2 + c_0$ ($t_0\in(0,T)$) should be better according to the series of theories in \cite{Y09}.
The inverse problems for the equation \eqref{equ-gov} in the case of $\al_1<\f12$ remain open.
The above arguments cannot deal with the case of sup-diffusion where the largest order is greater than half neither. However, in the case of the largest order is rational number and less than $\f34$, we found  the Carleman estimate with a cut-off function for the equation \eqref{equ-gov2} by using the regular weight function $d(x)-\be(t-t_0)^2 + c_0$ ($t_0\in(0,T)$) can be constructed. Then by an argument similar to that in \cite{Y09}, the conditional stability for the inverse source problem (Problem \ref{prob-ISP=0.75}) was proved as well as the conditional stability for the lateral Cauchy problem. The fractional order $\al=\f34$ is the largest one which one can deal with based on our arguments of Carleman estimate. Moreover, constant $\hat{s}$ and the generic constant $C$ and in Theorem \ref{thm-CE<=0.75} depend on the denominator of fractional order $\al$ which prevents us from extending the order to real number by the density of $\Q$ in $\R$. The case of the general order remains open.

Finally, it should be mentioned that the stability inequalities in {Theorem \ref{thm-LCP=0.75}} immediately gives 
the unique continuation result of \eqref{equ-gov2}, i.e. the solution of the equation \eqref{equ-gov2} must vanish in 
the whole domain $Q$ if its initial value is identically $0$ in $\Om$ and the partial boundary data 
$u|_{\Ga\times (0,T)}$ and $\pa_{\nu} u|_{\Ga\times (0,T)}$ are zero. This principle is called a weak type 
unique continuation for the equation \eqref{equ-gov} since the homogeneous initial value is not essential for the 
unique continuation (UC) (e.g., UC for the elliptic equations or UC for the parabolic equations).
However, we cannot repeat this argument to derive the weak unique continuation of \eqref{equ-gov} because the constant $\ve>0$ depends on the choice of $\Om_0$ in the estimate in Theorem \ref{thm-LCP<0.5}.
We refer to \cite{JLLY16} and \cite{SY11} for other kind of weak type unique continuation where the initial value does not vanish but the homogeneous Dirichlet or Neumann boundary condition is required.

\section*{Acknowledgement}
The first author thanks the Leading Graduate Course for Frontiers of Mathematical Sciences and Physics 
(FMSP, The University of Tokyo). The second author thanks Grant-in-Aid for Research Activity Start-up 
16H06712, JSPS. This work was supported by A3 Foresight Program \lq\lq Modeling and Computation of 
Applied Inverse Problems\rq\rq, Japan Society of the Promotion of Science (JSPS). The second and third 
authors are supported by Grant-in-Aid for Scientific Research (S) 15H05740, JSPS.


\end{document}